\newcolumntype{P}[1]{>{\centering\arraybackslash}p{#1}}
\newcolumntype{M}[1]{>{\centering\arraybackslash}m{#1}}
\newtheorem{theorem}{Theorem}[section]
\newtheorem{proposition}[theorem]{Proposition}
\newtheorem{corollary}[theorem]{Corollary}
\newtheorem{lemma}[theorem]{Lemma}
\theoremstyle{definition}
\newtheorem{definition}[theorem]{Definition}
\newtheorem{remark}[theorem]{Remark}
\newtheorem{notation}[theorem]{Notation}
\newcommand{\PP}{\mathbb{P}}
\newcommand{\CC}{\mathbb{C}}
\newcommand{\ZZ}{\mathbb{Z}}
\newcommand{\cO}{\mathcal{O} }
\newcommand{\cK}{\mathcal{K} }
\newcommand{\cM}{\mathcal{M} }
\newcommand{\cU}{\mathcal{U} }
\renewcommand{\bar}[1]{\overline{#1}}
\newcommand{\rG}{\mathrm{G} }
\newcommand\bF{\mathbf{F}}
\newcommand\bH{\mathbf{H}}
\newcommand\bS{\mathbf{S}}
\newcommand\hra{\hookrightarrow}
\def\Gr{\mathrm{Gr} }
\def\lr{\rightarrow}
\newcommand{\rank}{\mathrm{rank}\, }
\begin{document}

\title{Conics in quintic del Pezzo varieties}
\author{Kiryong Chung}
\address{Department of Mathematics Education, Kyungpook National University, 80 Daehakro, Bukgu, Daegu 41566, Korea}
\email{krchung@knu.ac.kr}

\author{Sanghyeon Lee}
\address{Shanghai Center for Mathematical Sciences, Shanghai, China}
\email{sanghyeon@fudan.edu.cn}

\keywords{Birational map, Grassmannian bundle, Clean intersection}
\subjclass[2020]{14E05; 14E08; 14M15}

\begin{abstract}
The smooth quintic del Pezzo variety $Y$ is well-known to be obtained as a linear sections of the Grassmannian variety $\mathrm{Gr}(2,5)$ under the Pl\"ucker embedding into $\mathbb{P}^{9}$. Through a local computation, we show the Hilbert scheme of conics in $Y$ for $\text{dim} Y \ge 3$ can be obtained from a certain Grassmannian bundle by a single blowing up/down transformation.
\end{abstract}

\maketitle
\section{Motivation and results}
\subsection{Rational curves in quintic del Pezzo varieties}
By definition, the quintic del Pezzo variety $Y$ is a smooth projective variety such that the anti-canonical line bundle of $Y$ is isomorphic to $-K_{Y}\cong -(\dim Y-1)L$ for some ample generator $L\in \text{Pic}(Y)\cong \ZZ$ with $L^{\dim Y}=5$. By the work of Fujita (\cite{Fuj81}), it is known that the dimension of $Y$ is $\dim Y\leq 6$ and $Y$ is isomorphic to a linear section of Grassmannian variety $\Gr(2,5)$.
In this paper, the authors aim to explain that the Hilbert scheme of conics in the quintic del Pezzo variety can be obtained from a certain Grassmannian bundle. Research on rational curves in the quintic del Pezzo variety has been studied for a long time from a birational geometric perspective (\cite{FN89, Ili94, San14, KPS18, Chu22, Chu23}) . In particular, the existence of rational curves with a specific normal bundles play a crucial role in the determination of the automorphism groups. The main conclusion of this paper is to provide an alternative proof of previous work of authors in \cite[Proposition 5.8 and Proposition 6.7]{CHL18}, which were proved by using Li's results (\cite{Li09}). While the proof in \cite{CHL18} relied on a deformation theoretic argument, our paper will mainly depend on basic linear algebra calculations to provide a proof of the result. The linear algebra calculation is extracted from the second named author's Ph.D. thesis (\cite{Lee18}). Such an approach will offer a good opportunity for comparison with previous proof.
\subsection{Results}
From now on, let us fix $\{e_0,e_1,e_2,e_3, e_4\}$ a standard coordinate vectors of the space $V(\cong \CC^5)$, which provides the projective space $\PP(V)(=\PP^4)$. Let $\mathrm{G}:=\Gr(2,5)$ be the Grassmannian variety of two dimensional subspaces of $V$. Let $\{p_{ij}\}_{0\leq i<j\leq 4}$ be the Pl\"ucker coordinates of $\PP^9=\PP(\wedge^2V)$. Let $\bH_2(X)$ be the Hilbert scheme of conics in a smooth projective variety $X$ with a fixed embedding $X\subset \PP^r$. Let $\cU$ be the universal subbundle over the Grassmannian variety $\mathrm{Gr}(4,V)$. 
Let $\bS(\mathrm{G})=\Gr(3,\wedge^2\cU)$ be the Grassmannian bundle over $\mathrm{Gr}(4,V)$. For the general fiber $(V_3, V_4)\in \bS(\mathrm{G})$, we associate to a conic $\PP(V_3)\cap \Gr(2,V_4)\subset \Gr(2, V)$ as the intersection of $\PP(V_3)$ and $\Gr(2,V_4)$. Then this correspondence provides a birational map \[\Phi: \bS(\mathrm{G})\dashrightarrow \bH_2(\rG).\] The map $\Phi$ is not undefined whenever $\PP(V_3)\subset\Gr(2,V_4)$ and thus the undefined locus $T(\rG)$ of $\Phi$ is isomorphic to the \emph{relative orthogonal} Grassmannian $T(\rG)\cong \text{OG}(3,\wedge^2 \cU)$.
By blowing-up $\bS(\mathrm{G})$ along $T(\rG)$, we have a birational morphism $\widetilde{\Phi}:\widetilde{\bS}(\rG)\lr \bH_2(\rG)$. Furthermore, as applying the Fujiki-Nakano criterion (\cite{FN71}) to the morphism $\widetilde{\Phi}$, it turns out that the extended morphism $\widetilde{\Phi}$ is a smooth blow-down map. In summery,
\begin{theorem}[\protect{\cite[Section 4.1]{CHL18})}]\label{mainthm}
Under the above definition and notations, we have a blow-up/down diagram
\begin{equation}\label{diahil}
\xymatrix{&\widetilde{\bS}(\rG)\ar[dl]_{\Phi}\ar[dr]^{\widetilde{\Phi}}&\\
\bS(\rG)\ar@{=>}[d]_{\pi}&&\bH_2(\rG)\\
\Gr(4,V),&&
}
\end{equation}
where $\pi$ is the canonical bundle morphism.
\end{theorem}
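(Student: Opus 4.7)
The proof has three main steps: (i) establish birationality of $\Phi$ and identify the base locus $T(\rG)$; (ii) resolve $\Phi$ via a local coordinate computation on the blow-up; and (iii) apply the Fujiki--Nakano criterion to $\widetilde{\Phi}$. Throughout, all calculations reduce to linear algebra on a single fiber of $\Gr(4, V)$, in the spirit of \cite{Lee18}.

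For (i): Since $\Gr(2, V_4) \subset \PP(\wedge^2 V_4) \cong \PP^5$ is the Klein quadric $Q^4$, a general $\PP(V_3) \cong \PP^2$ meets $Q^4$ in a smooth conic $C = \Phi(V_3, V_4)$; conversely, a general conic $C \subset \rG$ recovers $V_4 := \sum_{[L] \in C} L$ and $V_3 := \langle C \rangle$. Dimensions match: $\dim \bS(\rG) = 4 + 9 = 13 = \dim \bH_2(\rG)$. The base locus consists of pairs with $\PP(V_3) \subset Q^4$, i.e., $V_3$ maximal isotropic for the Pl\"ucker quadric on $\wedge^2 V_4$; these form $\mathrm{OG}(3, \wedge^2 V_4)$, a disjoint union of two $\PP^3$'s comprising the $\alpha$-family $V_3 = v \wedge V_4$ ($v \in V_4$) and the $\beta$-family $V_3 = \wedge^2 W$ ($W \subset V_4$ a hyperplane). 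Relativizing yields $T(\rG) \cong \mathrm{OG}(3, \wedge^2 \cU)$.

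For (ii): Near a point $p = (V_3, V_4) \in T(\rG)$ of $\alpha$-type with basis $v, e_1, e_2, e_3$ of $V_4$ and $V_3 = \langle v \wedge e_i \rangle_{i=1}^3$, the Pl\"ucker form $q$ identifies $\wedge^2 V_4 / V_3 \cong V_3^*$ (by maximal isotropy), and nearby 3-planes are parametrized by $\phi \in \mathrm{Hom}(V_3, V_3^*) \cong \CC^9$. A direct calculation gives $q|_{V_3(t\phi)} = t \cdot B_\phi + O(t^2)$, where $B_\phi(x, y) = \phi(x)(y) + \phi(y)(x)$ is symmetric. The isotropy constraint cuts out $T_{V_3} \mathrm{OG} \cong \wedge^2 V_3^*$, so $N_{T(\rG)/\bS(\rG)}|_p \cong \mathrm{Sym}^2 V_3^*$. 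Dividing by $t$ and taking the limit, the flat limit of $\PP(V_3(t\phi)) \cap Q^4$ in $\bH_2(\rG)$ is the conic $\{B_\phi = 0\} \subset \PP(V_3)$, depending only on $[B_\phi] \in \PP(\mathrm{Sym}^2 V_3^*) = \PP^5$. This defines the extension $\widetilde{\Phi}$ with $\widetilde{\Phi}|_{E_1}(p, [\phi_{\mathrm{sym}}]) = [\{B_\phi = 0\} \subset \PP(V_3) \subset \rG]$; the $\beta$-component is handled by an analogous calculation.

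For (iii): The two components of $T(\rG)$ behave differently under $\widetilde{\Phi}$. On $E_1^\alpha$, distinct $(V_4, v)$ parametrize distinct $\alpha$-planes $\Sigma_v^{V_4} \subset \rG$ (and the scheme structure of degenerate conics detects the ambient 2-plane via embedded thickening), so $\widetilde{\Phi}|_{E_1^\alpha}$ is injective and $\widetilde{\Phi}$ is an isomorphism nearby. On $E_1^\beta$, the $\beta$-plane $\sigma_W \subset \rG$ depends only on $W$ and not on $V_4 \supset W$, so $\widetilde{\Phi}|_{E_1^\beta}$ has $\PP^1$-fibers $\PP(V/W)$ over the smooth subvariety $Z_\beta \subset \bH_2(\rG)$ of conics in $\beta$-planes, of dimension $11$ and codimension $2$. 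The main obstacle is then the Fujiki--Nakano normal-bundle condition $N_{E_1^\beta / \widetilde{\bS}(\rG)}|_{\PP^1\text{-fiber}} \cong \cO(-1)$: although $\mathrm{Sym}^2 V_3^*$ appears constant along the $V_4$-fiber (since $V_3 = \wedge^2 W$ is independent of $V_4$), the identification of $N_{T^\beta/\bS(\rG)}$ with $\mathrm{Sym}^2 V_3^*$ uses the Pl\"ucker form on $\wedge^2 V_4$, which twists with $V_4$; the tautological $V_4/W \cong \cO(-1)$ on $\PP^1 = \PP(V/W)$ propagates to $N_{T^\beta/\bS(\rG)}|_{\PP^1} \cong \cO(-1)^{\oplus 6}$, and the projectivized tautological restricts to $\cO_{\PP^1}(-1)$ on the constant $[\phi_{\mathrm{sym}}]$ section. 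Fujiki--Nakano then yields $\widetilde{\bS}(\rG) \cong \mathrm{Bl}_{Z_\beta} \bH_2(\rG)$, making $\widetilde{\Phi}$ a smooth blow-down and bypassing the deformation-theoretic argument of \cite{CHL18} based on \cite{Li09}.
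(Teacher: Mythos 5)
The paper does not actually prove Theorem~\ref{mainthm} here --- it is quoted from \cite[Section 4.1]{CHL18} --- but the route sketched in Section~1.2 (identify the undefined locus $T(\rG)$ with the relative orthogonal Grassmannian $\mathrm{OG}(3,\wedge^2\cU)$, blow up along it, extend $\Phi$ to a morphism, and apply the Fujiki--Nakano criterion) is exactly the one you follow, and your key computations --- the two families of isotropic planes, the normal space $\Sym^2 V_3^*$ (twisted by $\wedge^4 V_4$), the flat limit $\{B_\phi=0\}\subset\PP(V_3)$, the $\PP^1$-fibers $\PP(V/W)$ over the $\beta$-locus, and the $\cO(-1)$ restriction needed for Fujiki--Nakano --- are all correct. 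Your proposal is a sound reconstruction of the cited argument with no genuine gap.
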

The goal of this paper is to prove that the diagram still holds for quintic del Pezzo $m$-fold $Y_m$ ($m\leq5$). Let $H_1$ (resp. $H_2$) be the linear subspace of $\PP^9$ defined by $p_{12}-p_{03}$ (resp. $p_{13}-p_{24}=0$). Let $Y_{5}=\Gr(2,V)\cap H_1$ and $Y_{4}=\Gr(2,V)\cap H_1\cap H_2$. Then $Y_m$ is smooth since the skew symmetric forms induced from the hyperplane $H_i$ are of rank $4$. Let $\bS(Y_m)$ be the Grassmannian bundle $\Gr(3, \cK_m)$ over $\Gr(4,V)$ where $\cK_m$ is defined by the kernel of composition maps
\begin{equation}\label{ker}
\cK_m:= \mathrm{ker}\big\{\wedge^2\cU \hra \wedge^2 V\otimes\cO_{\Gr(4,V)}\lr \cO_{\Gr(4,V)}^{\oplus^{(6-m)}}\big\},\end{equation}
where the second map is induced from the hyperplanes $H_i$. We can check that the kernel $\cK_m$ is locally free by direct rank computation of the composition map. Let $T(Y_m)=\bS(Y_m)\cap T(\rG)$ be the set-theoretic intersection of $\bS(Y_m)$ and $T(\rG)$. Now we state our main theorem.
\begin{theorem}\label{cleanint1}
Under the above definition and notations, there exists identity
\[
I_{T(Y_m),\; \bS(\rG)} = I_{T(\rG),\; \bS(\rG)} + I_{\bS(Y_m),\; \bS(\rG)}
\]
of ideals in $\bS(\rG)$. That is, $\bS(Y_m)$ and $T(\rG)$ \emph{cleanly} intersect in $\bS(\rG)$.
\end{theorem}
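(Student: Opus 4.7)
The identity is Zariski-local on $\bS(\rG)$, so it suffices to work near a fixed point $p=(V_3,V_4)\in T(Y_m)$. Since $\bS(\rG)$ is smooth of dimension $13$ and both $T(\rG)$ and $\bS(Y_m)$ are smooth subvarieties of codimensions $6$ and $3(6-m)$ respectively, the ideal equality is equivalent to the scheme $V(I_{T(\rG)}+I_{\bS(Y_m)})$ being reduced at $p$, or equivalently to the tangent-space identity
\[
\dim\bigl(T_pT(\rG)\cap T_p\bS(Y_m)\bigr)=\dim_p T(Y_m)\qquad\text{for every }p\in T(Y_m).
\]
My plan is to verify this by an explicit coordinate calculation, together with a parallel check that $T(Y_m)$ is smooth of the expected dimension at $p$.

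I would set up an affine chart at $p$ as follows. Pick a line $L\subset V$ with $V=V_4\oplus L$, providing coordinates $a=(a_1,\ldots,a_4)\in\Hom(V_4,L)\cong\CC^4$ on the Schubert cell of $\Gr(4,V)$ at $V_4$. Lift to a local frame $\tilde e_1,\ldots,\tilde e_4$ of $\cU$ and form the induced basis $\{\tilde e_i\wedge\tilde e_j\}$ of $\wedge^2\cU$. Pick a $3$-plane $W\subset\wedge^2 V_4$ complementary to $V_3$, providing coordinates $b=(b_{ij})\in\Hom(V_3,W)\cong\CC^9$ on the fiber of $\Gr(3,\wedge^2\cU)$ at $V_4$. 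In the resulting $13$-dimensional chart, the ideal $I_{T(\rG)}$ is locally generated by the six entries of the symmetric matrix $\bigl(Q(v_i,v_j)\bigr)_{1\le i\le j\le 3}$, where $v_1,v_2,v_3$ is the moving basis of the tautological bundle $\cS\subset\wedge^2\cU$ and $Q$ is the Plücker quadratic form; the ideal $I_{\bS(Y_m)}$ is locally generated by the $3(6-m)$ linear functions $H_k(v_i)$ for $1\le i\le 3$ and $1\le k\le 6-m$.

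Next I would compute the differentials of these $6+3(6-m)$ generators at $p$. Because $V_3$ is isotropic the six Plücker generators vanish to first order, with linear parts that are linear forms purely in the fiber variables $(b_{ij})$; a direct symbolic check shows them to be linearly independent (which reproves the smoothness of $T(\rG)$). Because $V_3\subset \cK_m|_{V_4}$ the functions $H_k(v_i)$ also vanish to first order, and their linear parts are specific combinations of the $(a_j)$ and $(b_{ij})$ determined by the skew bilinear forms $\omega_k\in\wedge^2 V^\vee$ underlying the hyperplanes $H_k$. The theorem then reduces to checking that the corank of the combined $\bigl(6+3(6-m)\bigr)\times 13$ Jacobian at $p$ equals $\dim_p T(Y_m)$.

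The main obstacle is carrying out this rank check uniformly over $T(Y_m)$. For $m=5$ the codimensions sum to $6+3=9$ against ambient dimension $13$, leaving expected intersection dimension $4$; here transverse intersection is what one expects and it already implies cleanness. For $m=4$ and $m=3$ transversality fails ($6+6=12$ and $6+9=15$ against $13$), so a genuinely clean but non-transverse argument is required. I would exploit the natural action of $\GL(V)$ on $\bS(\rG)$: the stabilizer of $Y_m$ acts on $T(Y_m)$ with only finitely many orbits, classified by the ranks of the restrictions $\omega_k|_{V_4}$ and by the position of $V_3$ relative to the common radical of these restrictions. Picking one representative per orbit reduces the problem to a finite list of concrete matrix rank computations in the chart above, after which equivariance propagates the conclusion to all of $T(Y_m)$.
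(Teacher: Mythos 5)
Your overall strategy is sound and genuinely different from the paper's. The paper proceeds by brute force: after determining $T(Y_m)$ set-theoretically (Propositions 3.2 and 3.6, describing it as a fibration over a linearly embedded $\Gr(3,4)$, resp.\ $\Gr(1,2)$, in $\Gr(4,5)$), it writes explicit generators of $I_{T(Y_m)}$, $I_{T(\rG)}$ and $I_{\bS(Y_m)}$ in every affine chart of the $\Gr(3,6)$-bundle and verifies the ideal identity by direct polynomial manipulation. You instead reduce to the infinitesimal criterion: $V(I_{T(\rG)}+I_{\bS(Y_m)})$ is supported on $T(Y_m)$, so if its embedding dimension at each point equals $\dim_p T(Y_m)$ then its local rings are regular, hence reduced, and the ideal identity follows. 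That reduction is correct, your local generators (the $6$ entries of the Pl\"ucker quadratic form restricted to the moving $3$-plane, plus the $3(6-m)$ linear conditions $H_k(v_i)$) are the right ones, and the codimension counts are right. The differential criterion buys you first-order checks at points of $T(Y_m)$ instead of ideal membership over whole charts; what it costs is that the set-theoretic description and smoothness of $T(Y_m)$ become an indispensable input, and that part (a nontrivial chunk of the paper's work) is only gestured at in your plan.

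Two concrete soft spots. First, your transversality bookkeeping for $m=4$ is off: $\mathrm{codim}\,T(\rG)+\mathrm{codim}\,\bS(Y_4)=6+6=12<13$, and $T(Y_4)$ is a disjoint union of two curves, so the intersection is dimensionally proper and in fact transverse (this is what the paper's chart computations amount to); no ``clean but non-transverse'' mechanism is needed for $m=4$ any more than for $m=5$. The case $m=3$ is vacuous since $T(Y_3)=\emptyset$ (Remark 1.4). Second, and more seriously, the step that makes your computation finite --- that the stabilizer of $Y_m$ in $\GL(V)$ acts on $T(Y_m)$ with finitely many orbits --- is asserted without proof, and it is the only thing standing between your plan and having to run the Jacobian rank check with parameters over charts, i.e.\ essentially the computation the paper performs. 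It is plausible (for $m=5$ the stabilizer acts through $\mathrm{GSp}(4)$ transitively on the base $\Gr(3,4)$ and with few orbits on each fiber $\PP^1$), but you must either prove it or accept the parametric computation; in either case the rank checks themselves, which carry the actual content of the theorem, are not carried out in your write-up, so as it stands this is a proof outline rather than a proof.
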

For the detailed discussion of the clean intersection of subvarieties, see the paper \cite{Li09}. Main idea of the proof of Theorem \ref{cleanint1} is to find the defining equation of the intersection part $T(Y)$ in two senses: the set-theoretic and scheme-theoretic intersection by local chart computation, which accompanies lots of linear algebra and brute force. By applying Fujiki-Nakano criterion again (\cite{FN71}), we arrive the same diagram 
\eqref{diahil} as for the case $\rG=\Gr(2,V)$.
\begin{corollary}\label{birmodel}
The digram \eqref{diahil} in Theorem \ref{mainthm} still holds when we replace $\mathrm{G}=\Gr(2,V)$ by the quintic del Pezzo varieties $Y_m$ ($m=4$, $5$).
\end{corollary}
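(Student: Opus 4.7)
My plan is to deduce Corollary~\ref{birmodel} from Theorem~\ref{cleanint1} by restricting the blow-up/down diagram of Theorem~\ref{mainthm} to $\bS(Y_m)\subset \bS(\rG)$ and applying the Fujiki--Nakano criterion to the restricted morphism. Setting $\widetilde{\bS}(Y_m):=\mathrm{Bl}_{T(Y_m)}\bS(Y_m)$, the restriction of $\Phi$ extends to a morphism $\widetilde{\Phi}_m:\widetilde{\bS}(Y_m)\to \bH_2(Y_m)$, and the whole task is to show that $\widetilde{\Phi}_m$ is a smooth blow-down with center in $\bH_2(Y_m)$.

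The first step is to identify $\widetilde{\bS}(Y_m)$ with the strict transform of $\bS(Y_m)$ inside $\widetilde{\bS}(\rG)$. This is precisely where Theorem~\ref{cleanint1} enters: the clean intersection of the smooth subvarieties $\bS(Y_m)$ and $T(\rG)$ in $\bS(\rG)$ implies, by the standard theory of blow-ups along cleanly intersecting centers (Li \cite{Li09}), that the strict transform of $\bS(Y_m)$ under $\widetilde{\bS}(\rG)\to \bS(\rG)$ is canonically isomorphic to $\mathrm{Bl}_{T(Y_m)}\bS(Y_m)$. In particular, $\widetilde{\bS}(Y_m)$ embeds as a smooth subvariety of $\widetilde{\bS}(\rG)$, and its exceptional divisor $E_m$ is the scheme-theoretic intersection of the exceptional divisor $E\subset \widetilde{\bS}(\rG)$ with $\widetilde{\bS}(Y_m)$.

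Next I would verify the Fujiki--Nakano criterion for $\widetilde{\Phi}_m$. Since $\Phi|_{\bS(Y_m)}$ is birational onto its image and extends to an isomorphism away from $T(Y_m)$, the map $\widetilde{\Phi}_m$ is an isomorphism on the complement of $E_m$. It remains to show that $E_m\to \widetilde{\Phi}_m(E_m)$ is a projective bundle of constant fiber dimension and that $\cO_{E_m}(E_m)$ restricts to the tautological line bundle on each fiber. Both features propagate from the already-established ambient case: property about the normal bundle is immediate from cleanness, because the conormal sequence yields $N_{E_m/\widetilde{\bS}(Y_m)}=N_{E/\widetilde{\bS}(\rG)}|_{E_m}$; and the fiberwise projective bundle structure follows once one checks that cutting $E\to \widetilde{\Phi}(E)$ by the hyperplane conditions defining $Y_m$ intersects each fiber in a projective subspace of the same codimension. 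This last compatibility reduces, fiber by fiber over $\Gr(4,V)$, to a linear algebra computation on $\wedge^2\cU$ using the kernel description \eqref{ker} of $\cK_m$ and the identification $T(\rG)\cong \mathrm{OG}(3,\wedge^2\cU)$.

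The principal obstacle is this last point: controlling the image $\widetilde{\Phi}_m(E_m)\subset \bH_2(Y_m)$ and showing that the fibers of $E_m$ over this image are equidimensional linear subspaces of the corresponding fibers of $E\to \widetilde{\Phi}(E)$. Concretely, one must verify that the hyperplanes $H_i$ cut the $\PP^k$-fibers of the ambient blow-down transversally (or at least with constant corank) along the orthogonal Grassmannian locus, so that no jumping of fiber dimension occurs on $\bS(Y_m)$. This is again a direct local-chart check in the same spirit as the proof of Theorem~\ref{cleanint1} and should be short. Once established, the Fujiki--Nakano criterion applies verbatim to $\widetilde{\Phi}_m$, identifying it as the blow-up of $\bH_2(Y_m)$ along a smooth subvariety, and the diagram \eqref{diahil} for $Y_m$ follows.
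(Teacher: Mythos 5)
Your proposal follows the same route as the paper: Theorem \ref{cleanint1} supplies the clean intersection, which identifies the strict transform of $\bS(Y_m)$ inside $\widetilde{\bS}(\rG)$ with $\mathrm{Bl}_{T(Y_m)}\bS(Y_m)$, and the Fujiki--Nakano criterion is then applied to the restricted contraction. The paper records only this one-line deduction from Theorem \ref{cleanint1}, so your fuller verification of the normal-bundle and $\PP^k$-bundle conditions on $E_m$ is a faithful elaboration of the intended argument rather than a different one.
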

\begin{remark}\label{minorcase}
For the quintic del Pezzo \emph{$3$-fold} $Y_3=\Gr(2,V)\cap H_1\cap H_2\cap H_3$, one can also define a rank $3$-bundle $\cK_{3}$ by using the hyperplanes $H_i$ and thus $\bS(Y_3)=\Gr(4, V)$. In this case $T(Y_3)$ is empty set. That is, the restriction map $\Phi|_{\bS(Y_3)}$ is a morphism. Furthermore one can easily show that $\Phi|_{\bS(Y_3)}$ is an isomorphism (cf. \cite[Proposition 1.2.2]{Ili94} and \cite[Proposition 7.2]{CHL18}). Furthermore, for $m=1,2$, we can easily observe that $\bS(Y_m) = \Gr(3,\cK_m)$ is an empty set because rank of $\cK_m$ is $m$ for general choice of hyperplanes of $\Gr(2,V)$.
\end{remark}
\subsection{Notation and convention}
\begin{itemize}
\item Let us denote by $\Gr(k,V)$ the Grassmannian variety parameterizing $k$-dimensional subspaces in a fixed vector space $V$ with $\dim V=n$.
\item We denote by $\langle e_1, e_2,\cdots, e_k\rangle$ the subspace of $V$ generated by $e_1, e_2,\cdots, e_k\in V$.
\item We sometimes do not distinguish the moduli point $[x]\in \cM$ and the object $x$ parameterized by $[x]$.
\item Whenever the meaning in the context is clear, we will use $I_X$ to denote the ideal $I_{X,Y}$ when $X\subset Y$.
\end{itemize}
\subsection*{Acknowledgements}
The author gratefully acknowledges the many helpful comments of In-Kyun Kim during the preparation of the paper.
\section{Preliminary}
\subsection{Planes in quintic Fano varieties}
If we consider a point $\ell\in \Gr(2,5)$ as a line in $\PP^{4}$ and fix a flag $p\in \PP^1\subset \PP^2\subset \PP^3\subset \PP^{4}$, then a plane in $\Gr(2,5)$ can be represented by one of two types of \emph{Schubert varieties}: $\sigma_{3,1}(p,\PP^3)=\{\ell\,|\, p\in \ell\subset \PP^3\}$ and $\sigma_{2,2}(\PP^2)=\{\ell\,|\, \ell\subset \PP^2\}$. Hence the space $\bF_2(\Gr(2,5))$ (so called, \emph{Fano scheme}) of planes in $\Gr(2,5)$ is isomorphic to 
\[\bF_2(\Gr(2,5))\cong \Gr(1,4,5)\sqcup \Gr(3,5)\]
such that the first (resp. second) one is of $\sigma_{1,3}$ (resp. $\sigma_{2,2}$)-type.
The Fano scheme $\bF_2(Y)$ of planes in a quintic Fano variety $Y$ was studied by several authors (\cite{Ili94, Pro94}). Let
\[\bF_2(Y)=\bF_2^{3,1}(Y)\sqcup \bF_2^{2,2}(Y)\]
be the disjoin union of two connected components such that $F_2^{3,1}(Y)$ parametrizes $\sigma_{3,1}$-type planes in $Y$ and $F_2^{2,2}(Y)$ parametrizes $\sigma_{2,2}$-type planes in $Y$.
\begin{proposition}(\cite[Section 4.4]{Ili94})\label{sy552}
Let $Y_5=\Gr(2,5)\cap H_1$. The first component $\bF_2^{3,1}(Y_5)$ is isomorphic to the blown-up of the projective space $\PP^4$ at a point and $\bF_2^{2,2}(Y_5)$ is isomorphic to a smooth quadric threefold $\Sigma$.
\end{proposition}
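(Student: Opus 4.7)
The plan is to translate the condition that a plane is contained in $Y_5$ into an isotropy condition for the skew form associated with $H_1$, and then identify the two resulting parameter spaces directly. Let $\omega\in\wedge^2V^*$ be the skew form corresponding to the linear form $p_{12}-p_{03}$; since $Y_5$ is smooth, $\omega$ has rank $4$, so I would fix the distinguished kernel line $K:=\ker\omega\subset V$ and note that $\omega$ descends to a non-degenerate symplectic form $\bar\omega$ on the $4$-dimensional quotient $V/K$.

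For $\bF_2^{3,1}(Y_5)$, a point is a flag $p\subset W\subset V$ with $\dim p=1$ and $\dim W=4$, and the associated plane $\{\ell\mid p\subset\ell\subset W\}$ lies in $Y_5$ iff every line $\ell=\langle p,w\rangle$ with $w\in W$ satisfies $\omega(p,w)=0$, i.e.\ iff $W\subset p^\perp$. Noting that $K\subset p^\perp$ for every $p$, I would split into two cases. When $[p]\ne[K]$, $p^\perp$ is $4$-dimensional, so $W=p^\perp$ is uniquely determined. When $[p]=[K]$, $p^\perp=V$, so $W$ can be any $4$-plane through $K$. Hence the forgetful morphism $(p,W)\mapsto p$ from $\bF_2^{3,1}(Y_5)$ to $\PP^4$ is an isomorphism away from $[K]$ and has exceptional fibre $\Gr(3,V/K)\cong\PP^3$ over $[K]$. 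After verifying by a short affine-chart computation on $\Gr(1,4,5)$ that $\bF_2^{3,1}(Y_5)$ is smooth, the universal property of the blow-up identifies it with $\mathrm{Bl}_{[K]}\PP^4$.

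For $\bF_2^{2,2}(Y_5)$, a point is a $3$-dimensional subspace $U\subset V$, and the plane $\Gr(2,U)$ lies in $Y_5$ iff $\omega|_U=0$. Since $\bar\omega$ is non-degenerate on the $4$-dimensional quotient $V/K$, every $\bar\omega$-isotropic subspace has dimension at most $2$; hence any $\omega$-isotropic $3$-plane $U$ must contain $K$, and $U/K\subset V/K$ is a $\bar\omega$-Lagrangian $2$-plane. Conversely, every such Lagrangian lifts uniquely, so $\bF_2^{2,2}(Y_5)$ is identified with the Lagrangian Grassmannian $LG(2,V/K)\subset\Gr(2,V/K)$. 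Under the Pl\"ucker embedding $\Gr(2,V/K)\cong Q^4\subset\PP^5$, this Lagrangian Grassmannian is the hyperplane section cut out by the non-zero form $\bar\omega\in\wedge^2(V/K)^*$; non-degeneracy of $\bar\omega$ forces the section to be transverse, giving a smooth quadric threefold $\Sigma\cong Q^3$.

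The only non-formal point is verifying the scheme-theoretic identification in the first case, specifically the smoothness of $\bF_2^{3,1}(Y_5)$ near its exceptional fibre over $[K]$. Since the defining conditions are linear in the standard charts on $\Gr(1,4,5)$, this amounts to checking transversality of explicit linear equations and presents no serious obstacle; the remainder of the argument is pure linear algebra together with the classical description of $LG(2,4)$ as a hyperplane section of the Klein quadric.
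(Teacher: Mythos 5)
The paper does not prove this proposition; it is quoted directly from \cite[Section 4.4]{Ili94}, so there is no internal proof to compare against. Your argument is correct and is essentially the classical one: identifying $\bF_2^{3,1}(Y_5)$ with the graph closure of $p\mapsto p^{\perp_\omega}$ (hence $\mathrm{Bl}_{[K]}\PP^4$) and $\bF_2^{2,2}(Y_5)$ with $LG(2,V/K)\cong Q^3$, and it matches in spirit the rank-of-$\Omega|_{V_4}$ linear algebra the authors themselves use later in Proposition \ref{planes}; the only point you rightly flag as needing care (smoothness of the incidence variety near the fibre over $[K]$, so that the universal property and Zariski's main theorem apply) is routine in affine charts.
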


\begin{proposition}[\protect{\cite[Proposition 2.2]{Pro94}}]\label{planespace1}
Let $Y_4=\Gr(2,5)\cap H_1\cap H_2$. The space $\bF_2^{3,1}(Y_4)$ is isomorphic to a smooth conic $C_{v}:=\{[a_0:a_1:a_2:a_3:a_4]\mid a_0a_4+a_1^2=a_2=a_3=0\}\subset \PP(V)$ and $\bF_2^{2,2}(Y_4)$ is isomorphic to a point $[S]$.
\end{proposition}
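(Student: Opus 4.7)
The plan is to analyze the two components of $\bF_2(Y_4)$ separately, exploiting the description of $\sigma_{3,1}$- and $\sigma_{2,2}$-planes together with the two skew forms $\omega_1 = e^{12}-e^{03}$ and $\omega_2 = e^{13}-e^{24}$ on $V$ determined by $H_1, H_2$. These have rank $4$ with $\ker\omega_1 = \langle e_4\rangle$ and $\ker\omega_2 = \langle e_0\rangle$. A $\sigma_{3,1}(p,\PP(W))$-plane lies in $Y_4$ iff $\omega_i(p,v)=0$ for all $v\in W$ and $i=1,2$; a $\sigma_{2,2}(\PP(U))$-plane lies in $Y_4$ iff $\omega_1|_U = \omega_2|_U = 0$. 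So everything reduces to linear algebra on $V$.

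For $\bF_2^{3,1}(Y_4)$: the requirement $W\subseteq \ker\omega_1(p,\cdot)\cap \ker\omega_2(p,\cdot)$ with $\dim W = 4$ forces the two linear forms $\omega_i(p,\cdot)\in V^*$ to be linearly dependent. Writing their coefficients gives the $2\times 5$ matrix
\[
M(p)=\begin{pmatrix} a_3 & -a_2 & a_1 & -a_0 & 0\\ 0 & -a_3 & a_4 & a_1 & -a_2\end{pmatrix},
\]
and the condition $\rk M(p)\le 1$ makes all its $2\times 2$ minors vanish. A short inspection shows these minors cut out exactly $a_2=a_3=0$ and $a_0a_4+a_1^2=0$, i.e.\ the conic $C_v$. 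Conversely, every $p\in C_v$ produces a unique $4$-dimensional $W$ as the common kernel, automatically containing $p$ by alternation of $\omega_i$. The assignment $[\sigma_{3,1}(p,\PP(W))]\mapsto p$ is thus a bijective morphism $\bF_2^{3,1}(Y_4)\to C_v$; to upgrade it to an isomorphism I would compute the Zariski tangent space of $\bF_2^{3,1}(Y_4)$ at any point from the linear equations $M(p)=0$ and verify it is $1$-dimensional, so both sides are smooth curves and the bijection is an isomorphism.

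For $\bF_2^{2,2}(Y_4)$: any $3$-dimensional $U\subset V$ isotropic for a rank-$4$ skew form on $V$ must contain its kernel, since the perp-inequality $2\dim U - \dim(U\cap \ker \omega_i)\le \dim V = 5$ with $\dim U = 3$ forces $\dim(U\cap\ker\omega_i)\ge 1$. Hence $U\supseteq \langle e_0\rangle + \langle e_4\rangle = \langle e_0,e_4\rangle$, and we may write $U = \langle e_0,e_4,w\rangle$. The remaining isotropy equations collapse to $\omega_1(e_0,w) = -w_3 = 0$ and $\omega_2(e_4,w) = w_2 = 0$, so $w\in \langle e_0,e_1,e_4\rangle$. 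This pins down $U=\langle e_0,e_1,e_4\rangle$ uniquely, and I would take $[S]$ to be the class of $\Gr(2,\langle e_0,e_1,e_4\rangle)$.

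The main obstacle is not the bijective description, which is routine linear algebra, but the scheme-theoretic refinement: showing that the natural scheme structure on $\bF_2^{3,1}(Y_4)$ (as a subscheme of the flag variety $\Gr(1,4,V)$ cut out by the linear conditions $\omega_i(p,v)=0$) is reduced and agrees with the smooth conic $C_v$, and that the closed point $[S]\in \bF_2^{2,2}(Y_4)$ carries no infinitesimal structure. Both follow from a tangent-space count via the Zariski tangent space of the Hilbert scheme—equivalently, by computing $H^0$ of the normal bundle of a plane in $Y_4$—but that count has to be performed in local coordinates on the ambient flag/Grassmann variety to ensure that no extra infinitesimal directions survive the two linear conditions $H_1\cap H_2$.
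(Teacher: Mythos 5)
The paper does not prove this proposition at all: it is imported verbatim from Prokhorov's work (\cite[Proposition 2.2]{Pro94}), and the only trace of an argument in the paper is Remark \ref{planeeq}, which records the explicit planes $P_t=\PP(V_1\wedge V_4)$ with $V_1=\langle e_0+te_1-t^2e_4\rangle$, $V_4=\langle e_0,e_1,e_2+te_3,e_4\rangle$, and $S=\PP(\wedge^2\langle e_0,e_1,e_4\rangle)$, citing the proof of \cite[Lemma 6.3]{CHL18}. Your linear algebra is correct and reproduces exactly this data: your matrix $M(p)$ has rows $\omega_1(p,e_j)$ and $\omega_2(p,e_j)$, its $2\times 2$ minors include $a_3^2$, $a_2^2$ and $a_1^2+a_0a_4$, so their common zero locus is set-theoretically $C_v$; the rank of $M(p)$ is exactly $1$ (never $0$) along $C_v$ since the two rows cannot vanish simultaneously, so $W=\ker M(p)$ is a well-defined rank-$4$ kernel bundle over $C_v$ and for $p=(1,t,0,0,-t^2)$ one recovers $W=\langle e_0,e_1,e_2+te_3,e_4\rangle$, matching Remark \ref{planeeq}. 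Your $\sigma_{2,2}$ argument (isotropic $3$-planes for a rank-$4$ form must contain the kernel, forcing $U\supseteq\langle e_0,e_4\rangle$ and then $U=\langle e_0,e_1,e_4\rangle$) is likewise correct.

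The one genuine gap is the one you flag yourself: the scheme-theoretic identification. The determinantal ideal of $M(p)$ in $\PP(V)$ is visibly non-radical (it contains $a_2^2,a_3^2$ but not obviously $a_2,a_3$), so you cannot read off smoothness of $\bF_2^{3,1}(Y_4)$ from that ideal; you must work with the linear equations cutting the Fano scheme out of the flag variety $\Gr(1,4,V)$ (equivalently compute $H^0$ of the normal bundle of a plane in $Y_4$) and check the tangent space is $1$-dimensional, respectively $0$-dimensional at $[S]$. You state this plan but do not carry it out, so as written the proof establishes a bijective morphism onto a smooth conic (resp.\ a single point) rather than the claimed isomorphism. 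Since the paper outsources the whole statement to \cite{Pro94}, your argument, once that tangent-space count is executed in a local chart, would constitute a self-contained proof that the paper itself does not supply.
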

\begin{remark}\label{planeeq}
By the proof of \cite[Lemma 6.3]{CHL18}, the $\sigma_{3,1}$-type planes $P_t$ in $Y_4$ parameterized by $t\in C_{v}$ are $P_t=\PP(V_1\wedge V_4)$ where $V_1=\langle e_0+te_1-t^2e_4\rangle$ and $V_4=\langle e_0,e_1,e_2+te_3,e_4\rangle$. Also the unique plane $S$ in $Y_4$ is given by $S=\PP(\wedge^2 V_3)$ such that $V_3=\langle e_0,e_1,e_4\rangle$.
\end{remark}

\subsection{Conics via a Grassmannian bundle}
Let $\cU$ be the universal subbundle over the Grassmannian $\mathrm{Gr}(4,V)$. 
Let $\bS(\mathrm{G})=\Gr(3,\wedge^2\cU)$ be the Grassmannian bundle over $\mathrm{Gr}(4,V)$. The space $\bS(\rG)$ is an incidence variety of pairs 
\[\bS(\rG)=\{(U,V_4)\,|\,U\subset \wedge^2 V_4\}\subset \mathrm{Gr}(3,\wedge^2 V)\times \mathrm{Gr}(4, V).\]
Furthermore the correspondence
\[
(U,V_4)\mapsto \PP(U)\cap \Gr(2,V_4)
\]
between $\bS(\rG)$ and $\bH_2(\rG)$ provides an birational map $\Phi: \bS(\rG)\dashrightarrow \bH_2(\rG)$.
\begin{lemma}\label{interG}
Let $T(\rG)$ be the undefined locus of the map $\Phi$. Then $T(\rG)$ is isomorphic to the disjoint union of flag varieties:
\begin{equation}\label{undefG}
T(\rG)\cong\Gr(1,4,5)\sqcup \Gr(3,4,5).
\end{equation}
\end{lemma}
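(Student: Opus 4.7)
The plan is to unwind the geometric meaning of the map $\Phi$ and reduce the problem to the classical classification of planes in a smooth four-dimensional quadric. Given $(U,V_4)\in \bS(\rG)$, the associated cycle is $\PP(U)\cap \Gr(2,V_4)$ inside $\PP(\wedge^2V_4)\cong\PP^5$. Since the Pl\"ucker embedding realizes $\Gr(2,V_4)\subset \PP^5$ as a smooth four-dimensional quadric $Q$ (cut out by the Pfaffian), the scheme-theoretic intersection of the plane $\PP(U)\cong\PP^2$ with $Q$ is either a plane conic (when $\PP(U)\not\subset Q$) or the whole plane $\PP(U)$ itself. Consequently the first step is to identify
\[
T(\rG)\;=\;\bigl\{(U,V_4)\in\bS(\rG)\;\big|\; \PP(U)\subset \Gr(2,V_4)\bigr\}.
\]

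Next, I would classify the planes contained in $Q=\Gr(2,V_4)$. This is the standard fact that a smooth four-dimensional quadric carries exactly two three-dimensional families of planes (the two rulings under the spinor identification): for each line $V_1\subset V_4$ one obtains the \emph{$\alpha$-plane} $\{V_2\subset V_4\mid V_1\subset V_2\}=\PP(V_1\wedge V_4)$, and for each hyperplane $V_3\subset V_4$ one obtains the \emph{$\beta$-plane} $\Gr(2,V_3)=\PP(\wedge^2V_3)$. A direct check (e.g.\ intersecting two planes from the same family gives a point while two from different families meet in a line, or simply because the two families are disjoint in the Fano scheme of $Q$) shows these exhaust the planes in $Q$ and that the two families are disjoint. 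Hence, for fixed $V_4$, the fibre of $T(\rG)\to \Gr(4,V)$ over $[V_4]$ is $\PP(V_4)\sqcup \Gr(3,V_4)$.

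Globalizing over $\Gr(4,V)$, the first family yields the two-step flag variety $\{(V_1,V_4)\mid V_1\subset V_4\subset V\}=\Gr(1,4,5)$ via $(V_1,V_4)\mapsto (V_1\wedge V_4,\,V_4)$, and the second yields $\{(V_3,V_4)\mid V_3\subset V_4\subset V\}=\Gr(3,4,5)$ via $(V_3,V_4)\mapsto (\wedge^2V_3,\,V_4)$. Each map is a closed embedding into $\bS(\rG)$ (the subspace $U$ is determined by $(V_1,V_4)$ or $(V_3,V_4)$), and their images are disjoint by the previous paragraph, giving the asserted decomposition \eqref{undefG}.

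The only genuinely non-trivial step is the classification of planes in the quadric fourfold $Q=\Gr(2,V_4)$; everything else is bookkeeping. I expect this to be the main conceptual obstacle only in the sense that one must verify that the two explicit families $\PP(V_1\wedge V_4)$ and $\Gr(2,V_3)$ are \emph{all} the planes in $Q$. In practice this can be dispatched quickly by noting that a plane on $Q$ corresponds to a maximal isotropic subspace of the associated quadratic form, of which there are exactly two $\PP^3$-families, or alternatively by cohomological dimension count of the Fano scheme of planes of the smooth quadric fourfold.
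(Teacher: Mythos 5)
Your proposal is correct and follows essentially the same route as the paper: both identify the undefined locus with the pairs $(U,V_4)$ for which $\PP(U)\subset\Gr(2,V_4)$, and then invoke the classification of planes in the smooth quadric fourfold $\Gr(2,V_4)\subset\PP(\wedge^2V_4)$ (the paper phrases this as the relative orthogonal Grassmannian $\mathrm{OG}(3,\wedge^2\cU)$ having two components, which is exactly your two rulings of $\alpha$-planes $\PP(V_1\wedge V_4)$ and $\beta$-planes $\PP(\wedge^2V_3)$). Your write-up simply makes explicit the fibrewise classification and globalization that the paper states without detail.
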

\begin{proof}
Fiberwisely, it is clearly that $\Phi$ is not defined if and only if $\PP(U)\subset \Gr(2,V_4)$. Hence $T(\rG)$ is isomorphic to the relative orthogonal Grassmannian $T(\rG)\cong \textrm{OG}(3,\wedge^2\cU)$, where the later space is the disjoint union $\Gr(1,4,5)\sqcup \Gr(3,4,5)$ of the two flag varieties.
\end{proof}
\begin{notation}
$T^{3,1}(\rG) := \Gr(1,4,5)$ and $T^{2,2}(\rG) := \Gr(3,4,5)$ in equation \eqref{undefG}.
\end{notation}
The embedding $T(\rG)=T^{3,1}(\rG)\sqcup T^{2,2}(\rG)\hookrightarrow \bS(\rG)$ is defined by the following ways.
\begin{enumerate}[(a)]
\item For a pair $(V_1,V_4) \in T^{3,1}(\rG)$ ($V_1$ is a $1$-dimensional vector space representing a \emph{vertex point} of $\sigma_{3,1}$-plane), 
$$(V_1,V_4)\mapsto (W, V_4)$$ 
where $W=\ker(\wedge^2V_4\twoheadrightarrow \wedge^2(V_4/V_1))(=V_1\wedge V_4)$ is the $3$-dimensional vector space. In this case, $(V_1,V_4)$ determines a $\sigma_{3,1}$-type plane.
\item For a pair $(V_3,V_4)\in T^{2,2}(\rG)$, 
$$(V_3,V_4)\mapsto (\wedge^2V_3, V_4).$$
In this case, $V_3$ determines a $\sigma_{2,2}$-type plane.
\end{enumerate}

\subsection{Determinant of matrix product}
We recall the Cauchy-Binet formula here, which is useful for further calculation.
\begin{proposition}[\protect{Cauchy-Binet formula}]
Let $A$ (resp. $B$) be a $n\times m$ (resp. $m \times n$) matrix where $n \leq m$. Then we have the following formula for the determinant of the matrix $AB$:
\[
\det(AB)=\sum\limits_{S \in \binom{[m]}{n}}\det A_{[n],S}\cdot \det B_{S,[n]}
\]
where $[m]={1,2,...,m}$ is a set and $\binom{[m]}{n}$ is a set of $n$ combinations of elements in $[m]$.
\end{proposition}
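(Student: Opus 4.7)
My plan is to prove the Cauchy--Binet formula by direct expansion using the Leibniz definition of the determinant, followed by a reindexing argument. A slicker route would proceed through the functoriality of the $n$-th exterior power---$\wedge^{n}(AB) = \wedge^{n}A \circ \wedge^{n}B$, with the induced matrix entries in the standard wedge bases being precisely the $n \times n$ minors---but the Leibniz expansion is self-contained and reduces the identity to careful manipulation of finite sums.

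The first step is to substitute $(AB)_{i,\sigma(i)} = \sum_{k=1}^{m} A_{ik}B_{k,\sigma(i)}$ into the Leibniz formula and interchange the outer product with the inner sum over $k$. This yields
\[
\det(AB) = \sum_{f : [n] \to [m]} \Bigl(\prod_{i=1}^{n} A_{i,f(i)}\Bigr) \cdot \sum_{\sigma \in S_n} \mathrm{sgn}(\sigma) \prod_{i=1}^{n} B_{f(i),\sigma(i)},
\]
where $f$ ranges over all functions from $[n]$ to $[m]$. For each fixed $f$, the inner sum is the determinant of the $n \times n$ matrix whose $i$-th row equals the $f(i)$-th row of $B$. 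Whenever $f$ fails to be injective two rows of this matrix coincide, so the inner determinant vanishes and only injective $f$ contribute.

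The second step is to regroup the injective functions by their image. Each subset $S = \{s_1 < \cdots < s_n\} \in \binom{[m]}{n}$ is the image of exactly $n!$ injective functions, parametrized by $\tau \in S_n$ via $f(i) = s_{\tau(i)}$. For such $f$, reordering the rows of the $B$-submatrix into the canonical increasing order introduces a sign $\mathrm{sgn}(\tau)$, giving inner determinant $\mathrm{sgn}(\tau)\, \det B_{S,[n]}$. Meanwhile $\prod_{i} A_{i,f(i)} = \prod_{i} A_{i,s_{\tau(i)}}$, so summing $\mathrm{sgn}(\tau) \prod_i A_{i,s_{\tau(i)}}$ over $\tau \in S_n$ recovers the Leibniz expansion of $\det A_{[n],S}$. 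Summing over $S$ then produces exactly
\[
\det(AB) = \sum_{S \in \binom{[m]}{n}} \det A_{[n],S} \cdot \det B_{S,[n]}.
\]

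The only real obstacle is bookkeeping: one must verify that the sign $\mathrm{sgn}(\tau)$ picked up when reordering the rows of the $B$-minor into increasing index order combines cleanly with the $\mathrm{sgn}(\sigma)$ from the outer Leibniz expansion, so that the two minors appear symmetrically with no residual sign. Since no entry of $A$ or $B$ is ever swapped with its neighbor in a nontrivial way beyond this single relabeling, the accounting is mechanical and the identity falls out.
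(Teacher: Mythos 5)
Your argument is correct and complete: the substitution of $(AB)_{i,\sigma(i)}=\sum_k A_{ik}B_{k,\sigma(i)}$ into the Leibniz expansion, the observation that non-injective index functions $f:[n]\to[m]$ contribute a determinant with repeated rows and hence vanish, and the regrouping of the surviving injective $f$ by their image $S$ with the sign $\mathrm{sgn}(\tau)$ absorbed into the Leibniz expansion of $\det A_{[n],S}$ are all handled properly; this is the standard proof of Cauchy--Binet. Note, however, that the paper does not prove this proposition at all --- it is stated purely as a recalled classical fact (only the special case $n=2$, $m=3$ in the subsequent corollary is attributed to a reference and checked by direct calculation) --- so there is no argument in the paper to compare yours against; your self-contained derivation simply supplies what the authors took as known.
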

For $n=2, m=3$ case, we can check the following corollary by direct calculation.
\begin{corollary}[\protect{\cite[Example 4.9]{BW89}}]\label{Cauchybinetcoro}
Let $A$ (resp. $B$) be a $2\times 3$ (resp. $3\times 2$) matrix. Let $[A]_0$, $[A]_1$ (resp. $[B]^0$, $[B]^1$) be a row (resp. column) vector of $A$ (resp. $B$). Then
\[ \det {AB} = ([A]_0\times[A]_1) \cdot ([B]^0\times[B]^1),
\]
where $'\times'$ is a \emph{cross product} defined in $\CC^3$.
\end{corollary}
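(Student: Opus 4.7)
The plan is to derive the stated identity directly from the Cauchy--Binet formula. With $n=2$ and $m=3$, the formula expands as a sum of three terms indexed by $S\in\binom{[3]}{2}$, namely $S=\{1,2\},\{1,3\},\{2,3\}$, giving
\[
\det(AB)=\sum_{S\in\binom{[3]}{2}}(\det A_{[2],S})\,(\det B_{S,[2]}).
\]
So there is nothing combinatorial to verify beyond matching these three terms to the coordinate expansion of the dot product of two cross products in $\mathbb{C}^3$.

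First I would write out the three $2\times 2$ minors $\det A_{[2],\{2,3\}}$, $\det A_{[2],\{1,3\}}$, $\det A_{[2],\{1,2\}}$ and observe that, by the coordinate formula for the cross product $(u_1,u_2,u_3)\times(v_1,v_2,v_3)=(u_2v_3-u_3v_2,\,u_3v_1-u_1v_3,\,u_1v_2-u_2v_1)$, these minors are exactly the three components of $[A]_0\times[A]_1$, up to the alternating sign pattern $(+,-,+)$. I would do the same for the minors $\det B_{S,[2]}$ of $B$, obtaining the three components of $[B]^0\times[B]^1$ with the same sign pattern. The point is that the alternating signs appear on both sides of each product and therefore cancel, leaving precisely the Cauchy--Binet sum of the form $\sum_{k=1}^3 (\pm m_k^A)(\pm m_k^B)$ with the two signs equal at every index.

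Putting the two calculations together, the dot product $([A]_0\times[A]_1)\cdot([B]^0\times[B]^1)$ reads term-by-term as
\[
(\det A_{[2],\{2,3\}})(\det B_{\{2,3\},[2]})+(\det A_{[2],\{1,3\}})(\det B_{\{1,3\},[2]})+(\det A_{[2],\{1,2\}})(\det B_{\{1,2\},[2]}),
\]
which is exactly the right-hand side produced by Cauchy--Binet. This yields the claimed equality.

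There is essentially no obstacle here: the proof is purely a bookkeeping exercise matching the three $2\times 2$ minors of a $2\times 3$ (or $3\times 2$) matrix to the three components of the cross product of its rows (or columns). The only thing to be careful about is the sign convention in the middle coordinate of the cross product; once the signs on the $A$-side and $B$-side are seen to line up, the identity drops out of Cauchy--Binet without further work. Alternatively, one could verify the identity by a brute-force polynomial expansion in the twelve entries of $A$ and $B$, but the Cauchy--Binet route is cleaner and makes the structural reason for the identity transparent.
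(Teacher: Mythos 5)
Your proof is correct and follows the same route the paper intends: the paper states the corollary as a direct-calculation consequence of the Cauchy--Binet formula for $n=2$, $m=3$, which is exactly your term-by-term matching of the three $2\times 2$ minors with the cross-product components (with the alternating sign in the middle coordinate cancelling on both sides). Nothing further is needed.
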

\section{Proof of Theorem \ref{cleanint1}}
By Remark \ref{minorcase}, for $m=1,2,3$, we have nothing to prove for Theorem \ref{cleanint1}. So we will consider $m=4,5$ case in this section. Recall the definition of the bundle $\cK_{m}$ in equation \eqref{ker}, which is inherited from the hyperplanes $H_1$ and $H_2$ of $\PP(\wedge^2 V_5)$.

\begin{definition}
Let
\begin{itemize}
\item $\bS(Y):=\mathrm{Gr}(3, \cK_{m})\subset \mathrm{Gr}(3,\wedge^2\cU)=\bS(\rG)$ and
\item $T(Y):= \bS(Y)\cap T(\rG)$ in $\bS(\rG)$
\end{itemize}
for $Y=Y_m$, $m=4$ or $5$.
\end{definition}
Note that the scheme structure of $\bS(Y)$ and $T(\rG)$ are reduced ones. In this section, we prove our main Theorem \ref{cleanint1}. Firstly, in Section \ref{setinter} and Section \ref{schthe2}, we will describe the intersection part $T(Y)$ set-theoretically. Secondly, in Section \ref{schthe1} (resp. Section \ref{schint4}), we confirm the clean intersection of $\bS(Y)$ and $T(\rG)$ for $Y=Y_5$ (resp. $Y_4$). Note that the explicit computations in the following sections are extracted from the second named author's Ph.D. thesis (\cite[Section 4.3.3, 4.3.4, 4.4.3]{Lee18}).

\subsection{Set theoretic intersection of $S(Y_5)$ and $T(\rG)$}\label{setinter}
Let $\Omega := p_{12}-p_{03}$ be the rank $4$ skew-symmetric $2$ form on $V(=\CC^5)$ induced from the hyperplane $H_1$.
\begin{proposition}\label{planes}
The intersection part $T(Y_5)$ is a fiberation over $\mathrm{Gr}(3,4)$ linearly embedded in $\mathrm{Gr}(4,5)$, where the linear embedding is given by the $1$-$1$ correspondence between $3$-dimensional subspaces in $\CC^5/\langle e_4 \rangle$ and $4$-dimensional subspaces in $V$ containing $\langle e_4 \rangle$. Furthermore,
\begin{enumerate}
\item the restriction $\Omega|_{V_4}$ on $V_4$ becomes a rank $2$ singular two form for each $V_4 \in \mathrm{Gr}(3,4) \subset \mathrm{Gr}(4,5)$.
\item The fiber of $T^{3,1}(Y^5)\subset \Gr(1,4,5)$ (resp. $T^{2,2}(Y^5)\subset \Gr(3,4,5)$) over $V_4$ is canonically identified with $\PP(\ker \Omega|_{V_4})\cong \PP^1 \subset \PP(V)$ (resp. $\PP((V/\ker \Omega|_{V_4})^*) \cong \PP^1 \subset \PP((V_4)^*)$).
\end{enumerate}
\end{proposition}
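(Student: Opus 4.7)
My plan is to translate the two incidence conditions cutting out $T(Y_5)$ inside $T(\rG)$ into linear-algebra conditions on the skew form $\Omega=p_{12}-p_{03}$, viewed as an element of $(\wedge^2V)^*$ or equivalently a skew $2$-form on $V$, and then to read off the fibration structure claimed in (1) and (2). The first thing to pin down is $\Omega$ itself: on $V=\langle e_0,\ldots,e_4\rangle$ it has rank $4$ with radical $\langle e_4\rangle$, because neither $p_{12}$ nor $p_{03}$ involves the index $4$. This will control everything: the rank of the restriction $\Omega|_{V_4}$ depends only on whether $V_4$ contains $\langle e_4\rangle$. Indeed, if $V_4\not\supset\langle e_4\rangle$ then $V_4\to V/\langle e_4\rangle$ is an isomorphism of symplectic spaces and $\Omega|_{V_4}$ has full rank $4$; if $V_4\supset\langle e_4\rangle$, a dimension count in the nondegenerate $4$-dimensional quotient $V/\langle e_4\rangle$ forces $\Omega|_{V_4}$ to have rank $2$ with a $2$-dimensional radical $K_{V_4}\supset\langle e_4\rangle$. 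This is exactly the statement (1), and it also identifies the locus of degenerate $V_4$'s with the linearly embedded $\Gr(3,4)\subset\Gr(4,5)$.

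Next I would unwind the condition that a point of $T(\rG)$ lies in $\bS(Y_5)$. By definition, $(U,V_4)\in\bS(\rG)$ lies in $\bS(Y_5)=\Gr(3,\cK_5)$ iff $U\subset\ker\bigl(\Omega|_{V_4}:\wedge^2V_4\to\CC\bigr)$. For a $T^{3,1}(\rG)$-point $(V_1,V_4)\mapsto(V_1\wedge V_4,V_4)$, this condition unwinds to $\Omega(v,w)=0$ for all $v\in V_1$ and $w\in V_4$, i.e.\ $V_1\subset\ker\Omega|_{V_4}=K_{V_4}$. For a $T^{2,2}(\rG)$-point $(V_3,V_4)\mapsto(\wedge^2V_3,V_4)$, it unwinds to $V_3$ being isotropic in $V_4$ for $\Omega|_{V_4}$. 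In either case, the condition is vacuous unless $\Omega|_{V_4}$ is degenerate, which by the first step forces $V_4\supset\langle e_4\rangle$.

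With that reduction, (2) becomes a two-line calculation for each component. Over a fixed $V_4\supset\langle e_4\rangle$, the $T^{3,1}(Y_5)$-fiber is the set of lines in the $2$-dimensional radical $K_{V_4}$, i.e.\ $\PP(K_{V_4})\cong\PP^1$. For $T^{2,2}(Y_5)$, passing to the nondegenerate $2$-dimensional quotient $V_4/K_{V_4}$ shows that any $3$-dimensional isotropic subspace $V_3\subset V_4$ must contain $K_{V_4}$ (otherwise its image in the $2$-dimensional symplectic quotient would be a $\geq2$-dimensional isotropic subspace, which is impossible). So the $T^{2,2}(Y_5)$-fiber is the set of hyperplanes of $V_4$ containing $K_{V_4}$, canonically identified with $\PP((V_4/K_{V_4})^*)\cong\PP^1$.

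I do not anticipate any serious obstacle here; the work is mostly bookkeeping. The only subtlety is keeping the two embeddings $T^{3,1}(\rG),T^{2,2}(\rG)\hookrightarrow\bS(\rG)$ straight and writing the isotropy/radical conditions in the same language as the kernel of $\wedge^2V_4\to\CC^{6-m}$ used in defining $\cK_m$. Once $\Omega$ is pinned down as a rank-$4$ form with radical $\langle e_4\rangle$, everything else is standard symplectic linear algebra.
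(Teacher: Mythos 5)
Your proposal is correct and follows essentially the same route as the paper: both reduce everything to the observation that $\Omega$ has rank $4$ with radical $\langle e_4\rangle$, deduce that $\Omega|_{V_4}$ is degenerate (of rank $2$) exactly when $V_4\supset\langle e_4\rangle$, and then identify the $T^{3,1}$- and $T^{2,2}$-fibers with $\PP(\ker\Omega|_{V_4})$ and $\PP((V_4/\ker\Omega|_{V_4})^*)$ via the same radical/isotropic-subspace arguments. The only (harmless) slip is the word ``vacuous'' where you mean that the incidence conditions are \emph{unsatisfiable} when $\Omega|_{V_4}$ is nondegenerate, so that the fiber is empty.
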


\begin{proof}
Case (1): Consider an arbitrary 4-dimensional vector space $V_4 \in \mathrm{Gr}(4,5)$. We can observe that rank $\Omega|_{V_4}\geq 2$ since we have $\rank \Omega =4$ and $\rank\Omega \leq \rank \Omega|_{V_4} + 2$.
If $\rank \Omega|_{V_4}=4$, then there cannot exist a vector $v\in \CC^5$ such that $v$ is orthogonal to $V_4$ with respect to the 2-form $\Omega$. Hence there does not exist any $\sigma_{3,1}$-plane contained in the fiber of $T(Y^5)$ on $V_4$. Moreover, there cannot exist a $3$-dimensional subspace $V_3 \subset V_4$ of $V_4$ such that $\Omega|_{V_3}=0$ since we have $\rank \Omega|_{V_4} \leq \rank \Omega|_{V_3}+2$. Therefore, there is no $\sigma_{3,1}$-plane in the fiber of $T(Y^5)$ over $V_4$. In summary, the fiber of $T(Y^5)$ over $V_4$ is empty whenever $\rank\Omega|_{V_4}=4$. Next, consider the case when $\rank \Omega|_{V_4}=2$. Assume that $V\cap \ker \Omega = V\cap \langle e_4 \rangle=\langle 0 \rangle$. Then, since $\Omega = p_{12}-p_{03}$ descent to the rank $4$ skew-symmetric 2-form $\bar{\Omega}$ on quotient space $V/\langle e_4 \rangle$. Since $V_4 \cap \langle 0 \rangle = 0$, we can easily observe that the natural isomorphism $\phi : V_4 \stackrel{\cong}{\to} V/\langle e_4 \rangle$ preserves skew-symmetric two forms, i.e. $\phi^* \bar{\Omega} = \Omega|_{V_4}$. Therefore $\rank \Omega|_{V_4}=4$, which is a contradiction. Thus we have $\langle e_4 \rangle \subset \CC^5$. Conversely if $\langle e_4 \rangle \subset \CC^5 = \ker \Omega$, then we have $\rank \Omega = 2$. Therefore $\rank \Omega|_{V_4}=2$ if and only if $V_4 \in \mathrm{Gr}(3,4)\subset \mathrm{Gr}(4,5)$, where $\mathrm{Gr}(3,4)\subset \mathrm{Gr}(4,5)$ is a linear embedding given by the 1-1 correspondence between $3$-dimensional subspaces in $\CC^5/\langle e_4 \rangle$ and $4$-dimensional subspaces in $\CC^5$ containing $e_4$. 

Case (2): The fiber of $T^{3,1}(Y^5) \subset F(1,4,5)$ over $V_4 \in \mathrm{Gr}(3,4)\subset \mathrm{Gr}(4,5)$ is represented by pairs $(p,V_4)$ such that $\Omega(p,V_4)=0$. Therefore, the fiber is canonically identified with $\PP(\ker \Omega)\cong \PP^1 \subset \PP(\CC^5)$. The fiber of $T^{2,2}(Y^5) \subset F(3,4,5)$ over $V_4$ is represented by pairs $(V_3,V_4)$ such that $V_3 \subset V_4$, $\Omega|_{V_3}=0$. Assume that $V_3 \cap \ker \Omega|_{V_4} = 1$. Then there is a natural isomorphism $\phi : V_3/(V_3 \cap \ker \Omega|_{V_4}) \stackrel{\cong}{\to} V_4/\ker \Omega|_{V_4}$. Then, when we denote by $\bar{\Omega}$ the induced 2-form on $V_4/\ker \Omega|_{V_4}$, and $\bar{\Omega'}$ be the induced 2-form on $V_3/(V_3 \cap \ker \Omega|_{V_4})$, we can observe that $\phi^* \bar{\Omega} = \bar{\Omega'}$. But we have $\rank \bar{\Omega'}=0$ since $\rank{\Omega|_{V_3}=0}$ and $\rank\bar{\Omega}=2$ since $\rank \Omega|_{V_4}=2$, which leads to the contradiction. Therefore, we have $\ker \Omega|_{V_4} \subset V_3$. Conversely, if $\ker \Omega|_{V_4} \subset V_3$, then it is clear that $\rank \Omega|_{V_3}=0$. Therefore, the fiber is canonically identifed with $\PP((V_4/\ker \Omega|_{V_4})^*) \cong \PP^1 \subset \PP((\CC^5)^*)$.
\end{proof}
\subsection{Clean intersection of $S(Y_5)$ and $T(\rG)$}\label{schthe1}
This subsection is devoted to proving Theorem \ref{cleanint1} for the $5$-fold $Y_5$. By Lemma \ref{interG}, we know that $T(\rG)$ is an $\text{OG}(3,6)\cong \PP^3\sqcup \PP^3$-bundle over $\mathrm{Gr}(4,5)$, $\sigma_{3,1}$-planes and $\sigma_{2,2}$-planes corresponds to each disjoint $\PP^3$. Denote them by $T(\rG)_{2,2}$ and $T(\rG)_{3,1}$. Since they are disjoint, we can consider them independently, i.e. it is enough to show that $I_{T(Y)_{2,2}, \bS(\rG)}= I_{\bS(Y), \bS(\rG)}+I_{T(\rG)_{2,2}, S(\rG)}$, $I_{T(Y)_{3,1}, \bS(\rG)}= I_{\bS(Y),\bS(\rG)}+I_{T(\rG)_{3,1},\bS(\rG)}$ where $T(Y)_{2,2}:=T(\rG)_{2,2}\cap \bS(Y)$, $T(Y)_{3,1}:=T(\rG)_{3,1}\cap \bS(Y)$.

We check $I_{T(Y)_{2,2}, \bS(\rG)}= I_{S(Y),\bS(\rG)}+I_{T(\rG)_{2,2},\bS(\rG)}$ for affine local charts.
Consider a chart for $\bS(\rG)$. Since $\bS(\rG)$ is a $\mathrm{Gr}(3,6)$-bundle over $\mathrm{Gr}(4,5)$, we should consider chart for $\Lambda \in \mathrm{Gr}(4,5)$ and $F \in \mathrm{Gr}(3,6) = \mathrm{Gr}(3,\wedge^2 \Lambda)$.
There are $5$ standard charts for $\Lambda \in \mathrm{Gr}(4,5)$:
\[
\Lambda= 
\left(\begin{matrix}
1 & 0 & 0 & 0 & a \\
0 & 1 & 0 & 0 & b \\
0 & 0 & 1 & 0 & c \\
0 & 0 & 0 & 1 & d \\ 
\end{matrix}\right),
\Lambda= 
\left(\begin{matrix}
1 & 0 & 0 & a & 0 \\
0 & 1 & 0 & b & 0 \\
0 & 0 & 1 & c & 0 \\
0 & 0 & 0 & d & 1 \\ 
\end{matrix}\right),
\Lambda= 
\left(\begin{matrix}
1 & 0 & a & 0 & 0 \\
0 & 1 & b & 0 & 0 \\
0 & 0 & c & 1 & 0 \\
0 & 0 & d & 0 & 1 \\ 
\end{matrix}\right),
\] 
\[
\Lambda= 
\left(\begin{matrix}
1 & a & 0 & 0 & 0 \\
0 & b & 1 & 0 & 0 \\
0 & c & 0 & 1 & 0 \\
0 & d & 0 & 0 & 1 \\ 
\end{matrix}\right)
\textrm{ and }
\Lambda= 
\left(\begin{matrix}
a & 1 & 0 & 0 & 0 \\
b & 0 & 1 & 0 & 0 \\
c & 0 & 0 & 1 & 0 \\
d & 0 & 0 & 0 & 1 \\ 
\end{matrix}\right).
\]
But in the first chart:
\[ \Lambda = \left(\begin{matrix}
1 & 0 & 0 & 0 & a \\
0 & 1 & 0 & 0 & b \\
0 & 0 & 1 & 0 & c \\
0 & 0 & 0 & 1 & d \\ \end{matrix} \right)
\]
the equation of $Y^5: p_{12}-p_{03}$ has no solution. Furthermore, since the symmetry interchanging the index $1$, $2$ and $0$, $3$ does not change the equation $p_{12}-p_{03}$, it is enough to consider the following two chart of $\mathrm{Gr}(4,5)$:
\[
\Lambda =
\left( \begin{matrix}
1 & 0 & a & 0 & 0  \\
0 & 1 & b & 0 & 0  \\
0 & 0 & c & 1 & 0  \\
0 & 0 & d & 0 & 1  \\ 
\end{matrix} \right)
\textrm{ and }
\Lambda =
\left( \begin{matrix}
1 & 0 & 0 & a & 0 \\
0 & 1 & 0 & b & 0 \\
0 & 0 & 1 & c & 0 \\
0 & 0 & 0 & d & 1 \\ 
\end{matrix} \right).
\]
Let us start with the first chart:
\[
\Lambda = \left( \begin{matrix}
1 & 0 & a & 0 & 0  \\
0 & 1 & b & 0 & 0  \\
0 & 0 & c & 1 & 0  \\
0 & 0 & d & 0 & 1  \\ \end{matrix} \right).
\]
Let $q_{01},...,q_{23}$ be a coordinate of a fiber of $\wedge^2\cU$ over this chart, where $\cU$ is a tautological rank $4$ bundle over $\mathrm{Gr}(4,5)$. Then we have $p_{12}-p_{03} = -aq_{01}-q_{02}+cq_{12}+dq_{13}$.
By Proposition \ref{planes}, $T^{2,2}(Y)$ is a fibration over $\mathrm{Gr}(3,4)$ linearly embedded in $\mathrm{Gr}(4,5)$, whose images are $\Lambda \in \mathrm{Gr}(4,5)$ such that $e_4 \in \Lambda$. Therefore, we have equation $d=0$ in $I_{T(Y)_{2,2}}$.

Next, $\sigma_{2,2}$-plane corresponds to $\PP^2$-plane in $\PP\Lambda\cong \PP^3 \subset \PP^4$ must one be of the following form(i.e. it correspond to the row space of the matrix $R\cdot \Lambda$) : 
\[
R = 
\left( \begin{matrix}
 1 & 0 & 0 & \alpha \\ 
 0 & 1 & 0 & \beta \\
 0 & 0 & 1 & \gamma \\
\end{matrix}
\right)
or
\left( \begin{matrix}
 1 & 0 & \alpha  & 0 \\ 
 0 & 1 & \beta   & 0 \\
 0 & 0 & \gamma  & 1 \\
\end{matrix}
\right)
or
\left( \begin{matrix}
 1 & \alpha  & 0 & 0 \\ 
 0 & \beta   & 1 & 0 \\
 0 & \gamma  & 0 & 1 \\
\end{matrix}
\right)
or
\left( \begin{matrix}
 \alpha  & 1 & 0 & 0 \\ 
 \beta   & 0 & 1 & 0 \\
 \gamma  & 0 & 0 & 1 \\
\end{matrix}
\right).
\]

Therefore, the intersection of $S(Y)$ and $T(\rG)$ arises only in the following three charts for fibers $F \in \mathrm{Gr}(3, \wedge^2 \Lambda)$ : 
\[
F=
\bordermatrix{
&01& 02& 03& 12& 13& 23\cr
&1 & 0 & e & 0 & f & g \cr
&0 & 1 & h & 0 & i & j \cr
&0 & 0 & k & 1 & l & m 
}, 
F=
\bordermatrix{
&01& 02& 03& 12& 13& 23\cr
&1 & e & 0 & f & 0 & g \cr
&0 & h & 1 & i & 0 & j \cr
&0 & k & 0 & l & 1 & m 
},
\]
\[
F=
\bordermatrix{
&01& 02& 03& 12& 13& 23\cr
&e & 1 & 0 & f & g & 0 \cr
&h & 0 & 1 & i & j & 0 \cr
&k & 0 & 0 & l & m & 1 
}
\textrm{and }F=
\bordermatrix{
&01& 02& 03& 12& 13& 23\cr
&e & f & g & 1 & 0 & 0 \cr
&h & i & j & 0 & 1 & 0 \cr
&k & l & m & 0 & 0 & 1 
}
\]
where the upper indices are indices of Pl\"ucker coordinates.
Let us start with the first chart :
\[
F=
\left(\begin{matrix}
1 & 0 & e & 0 & f & g \\
0 & 1 & f & 0 & i & j \\
0 & 0 & g & 1 & l & m \\ 
\end{matrix}\right), 
\]

In this case, we can easily observe that a $\sigma_{2,2}$-plane contained in this chart must correspond to the row space of a matrix of the form :
\[
R\Lambda = 
\left( \begin{matrix}
 1 & 0 & 0 & \alpha \\ 
 0 & 1 & 0 & \beta \\
 0 & 0 & 1 & \gamma \\
\end{matrix}
\right)\cdot \Lambda.
\]

For a matrix $M$, we let $M_i^j$ be a matrix obtained from $M$ by deleting $i$-th row and $j$-th column. From the equation $p_{12}-p_{03}=0$, and since $d=0$ for $\sigma_{2,2}$-planes in $T(Y)_{2,2}$, using Corollary \ref{Cauchybinetcoro}, we can observe that the equation for $T(Y)_{2,2}$ in this chart is $([R^4]_i\times [R^4]_j) \cdot ([\Lambda_4^5]^1\times [\Lambda_4^5]^2 - [\Lambda_4^5]^0 \times [\Lambda_4^5]^3) = ([R^4]_i\times [R^4]_j) \cdot (c,1,-a) = 0$ for all $0 \leq i < j \leq 2$.
Then, since $([R^4]_0\times [R^4]_2)=(0,-1,0)$, we have no solution. Therefore, the intersection of $T(\rG)$ and $S(Y)$ does not happens in this chart.

Next, we consider the second chart:
\[
F=
\left(\begin{matrix}
1 & e & 0 & f & 0 & g \\
0 & f & 1 & i & 0 & j \\
0 & g & 0 & l & 1 & m \\
\end{matrix}\right).
\]
We can easily observe that a $\sigma_{2,2}$-plane contained in this chart must correspond to the row space of a matrix of the form:
\[
R\Lambda = 
\left( \begin{matrix}
 1 & 0 & \alpha  & 0 \\ 
 0 & 1 & \beta   & 0 \\
 0 & 0 & \gamma  & 1 \\
\end{matrix}
\right)
\cdot \Lambda.
\]

In the same manner we can show that $([R^4]_i\times [R^4]_j) \cdot ([\Lambda_4^5]^1\times [\Lambda_4^5]^2 - [\Lambda_4^5]^0 \times [\Lambda_4^5]^3) = ([R^4]_i\times [R^4]_j) \cdot (c,1,-a) = 0$ for all $0 \leq i < j \leq 2$ is the equation for $T(Y)_{2,2}$ in this chart under the condition $d=0$. By direct calculations, we have $\gamma=0, \alpha c + \beta + a = 0$.

We observe that this $\sigma_{2,2}$-plane which correspond to the row space of the matrix $R\Lambda$ correspond to the following matrix form in the chart of $F$:
\[
\left(\begin{matrix}
1 & \beta  & 0 & -\alpha & 0 & 0      \\
0 & \gamma & 1 & 0       & 0 & \alpha \\
0 & 0      & 0 & \gamma  & 1 & \beta  \\
\end{matrix}\right)
\]
In summary, we obtain the full description of the equation of $T^{2,2}(Y)$ in the chart:
\[
I_{T^{2,2}(Y)} = \langle g,i,k, f+j, e-m, h,l,d, -fc+e+a \rangle .
\]

On the other hand, from the equation $-aq_{01}-q_{02}+cq_{12}+dq_{13}$, we have
\[
I_{S(Y)} = \langle -a-e+cf,-h+ci,-k+cl+d \rangle.
\]

And clearly the equation for $T(\rG)_{2,2}$ is given by
\[
I_{T^{2,2}(\rG)}=\langle g,i,k,f+j,e-m,h-l \rangle.
\]

Therefore, we can check the following clean intersection by direct calculation
\[
I_{T(\rG)_{2,2}} + I_{S(Y)} = I_{T(Y)_{2,2}}.
\]

Next, we consider the third chart :
\[
F=
\left(\begin{matrix}
e & 1 & 0 & f & g & 0 \\
h & 0 & 1 & i & j & 0 \\
k & 0 & 0 & l & m & 1 \\
\end{matrix}\right).
\]

Then we can easily observe that a $\sigma_{2,2}$-plane contained in this chart must correspond to the row space of a matrix of the form
\[
R\Lambda = 
\left( \begin{matrix}
 1 & \alpha  & 0 & 0 \\ 
 0 & \beta   & 1 & 0 \\
 0 & \gamma  & 0 & 1 \\
\end{matrix}
\right)
\cdot \Lambda.
\]
Then in the same manner, we can calculate $I_{T(Y)_{2,2}}$, $I_{S(Y)}$ and $I_{T(\rG)_{2,2}}$ by direct calculation :
\begin{align*}
& I_{T(Y)_{2,2}} = \langle g,i,k,f-j,e-m,h,l,d,fc-1-ea \rangle \\
& I_{S(Y)} = \langle -ae + cf + dg -1, -ah + ci + dj, -ak + dl + dm \rangle \\
& I_{T(\rG)_{2,2}}= \langle g,i,k, f-j, e-m, h+l \rangle.
\end{align*}
Therefore we can check the clean intersection $I_{T^{2,2}(Y)} = I_{S(Y)} + I_{T^{2,2}(\rG)}$ by direct calculation.

At last, we consider the fourth chart:
\[
F=
\left(\begin{matrix}
e & f & g & 1 & 0 & 0 \\
h & i & j & 0 & 1 & 0 \\
k & l & m & 0 & 0 & 1 \\
\end{matrix}\right).
\]
Then we can easily observe that a $\sigma_{2,2}$-plane contained in this chart must correspond to the row space of a matrix of the form:
\[
R\Lambda = 
\left( \begin{matrix}
 \alpha  & 1 & 0 & 0 \\ 
 \beta   & 0 & 1 & 0 \\
 \gamma  & 0 & 0 & 1 \\
\end{matrix}
\right)
\cdot \Lambda.
\]
Then in the same manner, we can calculate $I_{T^{2,2}(Y)}$, $I_{S(Y)}$ and $I_{T^{2,2}(\rG)}$ by direct calculation:
\begin{align*}
& I_{T^{2,2}(Y)} = \langle g,i,k,f-j,e+m,h,l,d,c-f-ae \rangle \\
& I_{\bS(Y)} = \langle -ae+c-f, -ah+d-i, -ak-l \rangle \\
& I_{T^{2,2}(\rG)}= \langle g,i,k, f-j, e+m, h-l \rangle 
\end{align*}
Therefore we can check the clean intersection $I_{T^{2,2}(Y)} = I_{S(Y)} + I_{T^{2,2}(\rG)}$ by direct calculation.

In summary, we checked the clean intersection $I_{S(Y)} + I_{T^{2,2}(G)}$ for the chart 
\[\Lambda = 
\left( \begin{matrix}
1 & 0 & a & 0 & 0  \\
0 & 1 & b & 0 & 0  \\
0 & 0 & c & 1 & 0  \\
0 & 0 & d & 0 & 1  \\ 
\end{matrix} \right)\in \mathrm{Gr}(4,5).\]
and all charts for $F\in \mathrm{Gr}(3,\wedge^2\Lambda)$.

We can also check the clean intersection for the second chart: 
\[
\Lambda = \left( \begin{matrix}
1 & 0 & 0 & a & 0 \\
0 & 1 & 0 & b & 0 \\
0 & 0 & 1 & c & 0 \\
0 & 0 & 0 & d & 1 \\ 
\end{matrix} \right).
\] 
But the computation proceeds exactly in the same manner as the case of first chart so we do not write it down here.

Next, we can also check clean intersection at $T(Y)_{3,1}$. We should check $I_{T^{3,1}(Y), \bS(\rG)}= I_{\bS(Y), \bS(\rG)}+I_{T^{3,1}(G), \bS(\rG)}$. We first consider an open chart for $\bS(\rG)$. Same as in the case of $T(Y)_{2,2}$, it is enough to consider $2$ chart for $\Lambda$:
\[
\Lambda = \left( \begin{matrix}
1 & 0 & a & 0 & 0  \\
0 & 1 & b & 0 & 0  \\
0 & 0 & c & 1 & 0  \\
0 & 0 & d & 0 & 1  \\ \end{matrix} \right)
\textrm{ and }
\Lambda = \left( \begin{matrix}
1 & 0 & 0 & a & 0 \\
0 & 1 & 0 & b & 0 \\
0 & 0 & 1 & c & 0 \\
0 & 0 & 0 & d & 1 \\ \end{matrix} \right)  
.\]
Let us start with the first chart :
\[
\Lambda = \left( \begin{matrix}
1 & 0 & a & 0 & 0  \\
0 & 1 & b & 0 & 0  \\
0 & 0 & c & 1 & 0  \\
0 & 0 & d & 0 & 1  \\ \end{matrix} \right).
\]
Let $q_{01},...,q_{23}$ be a coordinate of a fiber of $\wedge^2\cU$ over this chart. Then we have $p_{12}-p_{03} = -aq_{01}-q_{02}+cq_{12}+dq_{13}$.

Next, by Proposition \ref{planes}, $T^{3,1}(Y)$ is a fibration over $\mathrm{Gr}(3,4)$ linearly embedded in $\mathrm{Gr}(4,5)$, whose images are $\Lambda \in \mathrm{Gr}(4,5)$ such that $e_4 \in \Lambda$. Therefore, we have equation $d=0$ in $I_{T^{3,1}(Y)}$. Furthermore, by Proposition \ref{planes} again, a pair $(x,\Lambda)\in T^{3,1}(\rG)$ over $\Lambda$ contained in $T^{3,1}(Y)$ if and only if the vertex $x$ must be contained in the projectivized kernel of the $2$-form $(-ap_{01}+cp_{12}+dp_{13}-p_{02})$, which is equal to $\PP^1=\PP\langle (c,1,-a,0),(0,0,0,1) \rangle$. Therefore we should consider two types of the vertex $x$: $x = (c,1,-a,s) \textrm{ and } x = (sc,s,-sa,1)$ where $s \in k$. Let us start with the first vertex type: $x = (c,1,-a,s)$. The corresponding $\sigma_{3,1}$-plane is spanned by $(c,1,-a,s)\wedge(1,0,0,0), (c,1,-a,s)\wedge(0,0,1,0), (c,1,-a,s)\wedge(0,0,0,1)$. So we can rewrite it by a following $3\times 6$-matrix:
\[
\left(\begin{matrix}
1&-a&s&0&0&0\\ 
0& c&0&1&0&-s\\ 
0& 0&c&0&1&-a
\end{matrix}\right).
\]
Thus, intersection of $S(Y)$ and $T^{3,1}(G)$ only occurs in the following chart of $F$:
\[
F=\left(\begin{matrix}
1&e&f&0&0&g\\ 
0&h&i&1&0&j\\ 
0&k&l&0&1&m
\end{matrix}\right).
\] 
Therefore, we have $I_{T(Y)_{3,1}}=\langle f+j, e-m, e+a, h-l, c-h, g,i,k,d \rangle$.

On the other hand, $\sigma_{3,1}$-plane contained in this chart of $F$ is defined by the vertex of the form $x = (\alpha,1,\beta,\gamma)$ which correspond to the following $3 \times 6$-matrix:
\[
\left(\begin{matrix}
1 & \beta  & \gamma & 0 & 0 & 0       \\ 
0 & \alpha & 0      & 1 & 0 & -\gamma \\ 
0 &       0& \alpha & 0 & 1 & \beta   \\
\end{matrix}\right).
\]
Thus, we have $I_{T(\rG)_{3,1}} = \langle f+j, e-m, h-l, g,i,k \rangle$. Furthermore, from the equation $-aq_{01}-q_{02}+cq_{12}+dq_{13}$, we obtain the equation for $S(Y)$, i.e. $I_{S(Y)} = \langle -a-e, c-h, d-k \rangle$.
Finally, we can check the clean intersection $I_{T(Y)_{3,1}} = I_{T(\rG)_{3,1}} + I_{S(Y)}$ by direct calculation.

Next, we consider the second vertex type $x = (sc,s,-sa,1)$. The corresponding $\sigma_{3,1}$-plane is spanned by $(sc,s,-sa,1)\wedge(1,0,0,0), (sc,s,-sa,1)\wedge(0,1,0,0),(sc,s,-sa,1)\wedge(0,0,1,0)$. So we can rewrite it by a following $3\times 6$-matrix:
\[
\left(\begin{matrix}
s   & -sa & 1 & 0   & 0 & 0 \\ 
-sc & 0   & 0 & -sa & 1 & 0 \\ 
0   & -sc & 0 & -s  & 0 & 1 \\
\end{matrix}\right).
\]
Thus the intersection of $S(Y)$ and $T^{3,1}(\rG)$ only occurs in the following chart of $F$:
\[
F=\left(\begin{matrix}
e & f & 1 & g & 0 & 0 \\ 
h & i & 0 & j & 1 & 0 \\ 
k & l & 0 & m & 0 & 1 \\
\end{matrix}\right).
\] 
Therefore, we have $I_{T(Y)_{3,1}}=\langle f-j,h-l,e+m,g,i,k,f+ea,l-cm,d \rangle$.

On the other hand, $\sigma_{3,1}$-plane contained in this chart of $F$ is defined by the vertex of the form $x = (\alpha,\beta,\gamma,1)$ which correspond to the following $3 \times 6$-matrix:
\[
\left(\begin{matrix}
\beta   & \gamma  & 1 & 0       & 0 & 0 \\ 
-\alpha & 0       & 0 & \gamma  & 1 & 0 \\ 
0       & -\alpha & 0 & -\beta  & 0 & 1 \\
\end{matrix}\right).
\]
Thus, we have $I_{T(\rG)_{3,1}} = \langle f-j,h-l,e+m,g,i,k \rangle$. Furthermore, from the equation $-aq_{01}-q_{02}+cq_{12}+dq_{13}$, we obtain the equation for $S(Y)$, i.e. $I_{S(Y)} = \langle -ae-f+eg,-ah-i+cj+d,-ak-l+cm \rangle$. Finally, we can check the clean intersection $I_{T(Y)_{3,1}} = I_{T(\rG)_{3,1}} + I_{S(Y)}$ by direct calculation.

In summary, we checked the clean intersection $I_{S(Y)} + I_{T(\rG)_{3,1}}=I_{T(Y)_{3,1}}$ for the chart \[\Lambda = 
\left( \begin{matrix}
1 & 0 & a & 0 & 0  \\
0 & 1 & b & 0 & 0  \\
0 & 0 & c & 1 & 0  \\
0 & 0 & d & 0 & 1  \\ 
\end{matrix} \right).\]
We can also check the clean intersection for the second chart. But the it proceeds exactly in the same manner as the case of first chart so we do not write it down here. In summary, we checked the clean intersection of $\bS(Y)$ and $T(\rG)$ in $\bS(\rG)$ by direct calculation.
\subsection{Set theoretic intersection of $S(Y_4)$ and $T(\rG)$}\label{schthe2}
Let $\Omega_1 := p_{12}-p_{03}$ and $\Omega_2 := p_{13}-p_{24}$ be the skew-symmetric 2-forms on $V(=\CC^5)$ induced from the hyperplane $H_1$ and $H_2$ respectively.
\begin{proposition}\label{planes2}
The intersection part $T(Y^4)$ is a double cover over $\PP^1\cong \mathrm{Gr}(1,2)\subset \mathrm{Gr}(4,5)$, with 2 connected components, where $\mathrm{Gr}(1,2)\subset \mathrm{Gr}(4,5)$ is a linear embedding given by 1-1 correspondence between $1$-dimensional subspaces in $\CC^5/\langle e_0, e_1, e_4 \rangle $ and $4$-dimensional subspaces in $\CC^5$ containing $\langle e_0,e_1,e_4 \rangle$. Furthermore,
\begin{enumerate}
\item the restriction $\Omega_{i}|_{V_4}$ on $V_4$ for $i=1$, $2$ become a rank $2$ singular two form for each $V_4 \in \mathrm{Gr}(1,2) \subset \mathrm{Gr}(4,5)$.
\item The fiber of $T(Y^4)$ over $V_4 \in \mathrm{Gr}(1,2)$ is a $2$-point set, one point is the fiber of $T^{3,1}(Y^4) \subset \Gr(1,4,5)$ over $V_4$ defined by a pair $(\ker \Omega_1|_{V_4} \cap \ker \Omega_2|_{V_4}, V_4)$, and the other point is a fiber of $T^{2,2}(Y^4) \subset \Gr(3,4,5)$ over $V_4$ defined by a pair $(\ker \Omega_1|_{V_4} + \ker \Omega_2|_{V_4}, V_4)$. 
\end{enumerate}
\end{proposition}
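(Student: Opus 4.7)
The plan is to mirror the proof of Proposition \ref{planes}, now juggling the pair of skew forms $(\Omega_1, \Omega_2)$ coming from the hyperplanes $H_1, H_2$. A direct matrix computation in the standard basis first identifies the one-dimensional radicals $\ker\Omega_1 = \langle e_4\rangle$ and $\ker\Omega_2 = \langle e_0\rangle$ in $V$.

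For any $(V, V_4)\in T(Y_4)$ one needs a $\sigma_{3,1}$- or $\sigma_{2,2}$-plane inside $\Gr(2,V_4)\cap H_1\cap H_2$. Applying the argument of Proposition \ref{planes}(1) to each $H_i$ separately yields $\rank\Omega_i|_{V_4}\leq 2$, equivalently $V_4\supset\ker\Omega_i$; hence $V_4\supset\langle e_0, e_4\rangle$, constraining $V_4$ to the linear $\Gr(2,3)\cong\PP^2\subset\Gr(4,5)$. Both plane types then reduce to the \emph{same} further cutting condition on $V_4$: a $\sigma_{3,1}$-type pair requires a line $V_1\subset\ker\Omega_1|_{V_4}\cap\ker\Omega_2|_{V_4}$ in $V_4$, while a $\sigma_{2,2}$-type pair requires a three-dimensional $V_3\subset V_4$ isotropic for both forms, which (given $\rank\Omega_i|_{V_4}=2$) is equivalent to $V_3\supset\ker\Omega_1|_{V_4}+\ker\Omega_2|_{V_4}$; since $\dim V_3=3$, the sum must be three-dimensional, which again amounts to the two radicals meeting nontrivially.

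Writing $V_4 = \langle e_0, e_4, v_1, v_2\rangle$ with $v_1, v_2\in\langle e_1, e_2, e_3\rangle$, the next step is to write down the $4\times 4$ skew matrices of $\Omega_i|_{V_4}$ in the basis $(e_0, v_1, v_2, e_4)$ and read off their two-dimensional kernels via the cross-product formula of Corollary \ref{Cauchybinetcoro}. Solving the resulting $4\times 2$ linear system for a common vector in the two kernels, the non-triviality of the intersection should collapse to a single $2\times 2$ minor identity in the coordinates of $v_1, v_2$; geometrically this identity says the projection $\langle v_1, v_2\rangle\to\langle e_2, e_3\rangle$ has rank $\leq 1$, equivalently $\langle v_1, v_2\rangle\cap\langle e_1\rangle\neq 0$, i.e.\ $V_4\supset\langle e_0, e_1, e_4\rangle$. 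This identifies the base with $\Gr(1,2)\cong\PP^1\subset\Gr(4,5)$ as stated. For such $V_4$, the same computation shows the intersection is exactly one-dimensional (a unique $V_1$ giving the fiber of $T^{3,1}(Y_4)$) and the sum equals the fixed subspace $\langle e_0, e_1, e_4\rangle$ (a unique and constant $V_3$ giving the fiber of $T^{2,2}(Y_4)$, compatible with Remark \ref{planeeq}). These two points lie in the disjoint components $\Gr(1,4,5)$ and $\Gr(3,4,5)$ of $T(\rG)$, yielding the claimed double cover with two connected components.

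The main obstacle will be verifying that the scalar relation obtained from the $4\times 2$ linear system in the previous paragraph cuts out exactly $V_4\supset\langle e_0, e_1, e_4\rangle$ and not a larger locus; once the radicals are identified in closed form by the cross-product trick, the remaining bookkeeping parallels the argument of Proposition \ref{planes} with no further conceptual difficulty.
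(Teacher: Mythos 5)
Your proposal is correct and follows essentially the same route as the paper: reduce via Proposition \ref{planes} to the case $\rank\Omega_i|_{V_4}=2$ (forcing $e_0,e_4\in V_4$), show that the existence of either plane type is equivalent to the two $2$-dimensional radicals $\ker\Omega_1|_{V_4}$, $\ker\Omega_2|_{V_4}$ meeting in a line, characterize that locus as $V_4\supset\langle e_0,e_1,e_4\rangle$, and read off the two fiber points as the intersection and the sum of the radicals. The only difference is mechanical: where you solve an explicit $4\times 4$ Gram-matrix system and extract a $2\times2$ minor, the paper gets the same conclusion more quickly by evaluating $\Omega_1(v,e_0)$ and $\Omega_2(v,e_4)$ to kill the $e_2,e_3$ coordinates of a common kernel vector, and it also explicitly rules out the degenerate case $\ker\Omega_1|_{V_4}=\ker\Omega_2|_{V_4}$ (which your explicit computation handles implicitly).
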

\begin{proof}
Case (1): From the proof of Proposition \ref{planes2}, we can obtain that $\rank\Omega_1$ and $\rank\Omega_2$ $\geq 2$, and the fiber of $T(Y^4)$ over $V_4$ is empty if $\rank\Omega_1|_{V_4}$ or $\rank\Omega_2|_{V_4}$ is $4$. Therefore, it enough to consider the case that $\rank\Omega_1|_{V_4}=\rank\Omega_2|_{V_4}=2$. 

Case (2): Assume that $\ker \Omega_1|_{V_4} = \ker \Omega_2|_{V_4}$. Since $\langle e_4 \rangle \subset \ker \Omega_1|_{V_4}$ and $\langle e_0 \rangle \subset \ker \Omega_2|_{V_4} $, we have $\ker \Omega_1|_{V_4} = \ker \Omega_2|_{V_4} = \langle e_0, e_4\rangle$. Then, for an element $ae_1 + be_2 + ce_3 \in V_4$, we have $c=b=0$ from the relation $\Omega_1|_{V_4} = \Omega_2|_{V_4} = 0$ which contradicts to the fact that $V_4$ is a $4$-dimensional vector space. Therefore $\ker \Omega_1|_{V_4}$ and $\ker \Omega_2|_{V_4}$ cannot be equal. Next, consider the case when $\ker \Omega_1|_{V_4} \cap \ker \Omega_2|_{V_4} = \langle v \rangle$, i.e. $1$-dimensional vector space generated by $v \in \CC^5$. If we write $v = a_0e_0 + \dots + a_4e_4$, then from the condition that $\Omega_1(v,e_0) = \Omega_2(v,e_4)=0$, we have $b_2=b_3=0$. Therefore we conclude that $\langle e_0, e_1, e_4 \rangle \subset V_4$. Conversely, if $\langle e_0, e_1, e_4 \rangle \subset V_4$, then we can observe that $\ker\Omega_1|_{V_4}\subset \langle e_0,e_1,e_4 \rangle$, $\ker\Omega_2|_{V_4}\subset \langle e_0,e_1,e_4 \rangle$ in the same manner. Therefore we have $\ker \Omega_1|_{V_4} \cap \ker \Omega_2|_{V_4}$ is a $1$-dimensional vector space. Hence, the locus where $\ker \Omega_1|_{V_4} \cap \ker \Omega_2|_{V_4}$ is $1$-dimensional is the image of the linear embedding $\mathrm{Gr}(1,2) \subset \mathrm{Gr}(4,5)$, given by the 1-1 correspondence between $1$-dimensional subspaces in $\CC^5/\langle e_0, e_1, e_4 \rangle $ and $4$-dimensional subspaces in $\CC^5$ containing $\langle e_0,e_1,e_4 \rangle$. Furthermore, when we consider a $4$-dimensional subspace $V_4 \in \mathrm{Gr}(1,2) \subset \mathrm{Gr}(4,5)$ of $\CC^5$, the fiber $T^{3,1}(Y^4) \subset F(1,4,5)$ over $V_4$ is represented by a pair $(\ker \Omega_1|_{V_4}\cap \ker\Omega_2|_{V_4}, V_4)$, and the fiber $T^{2,2}(Y^4) \subset F(3,4,5)$ over $V_4$ is represented by a pair $(\ker \Omega_1|_{V_4} + \ker \Omega_2|_{V_4}, V_4)$. It is obvious that the fiber of $T(Y^4)$ is empty over the $4$-dimensional subspace $V_4$ of $\CC^5$ where $\ker \Omega_1|_{V_4} \cap \ker \Omega_2|_{V_4} = \langle 0 \rangle$.
\end{proof}

\subsection{Clean intersection of $S(Y_4)$ and $T(\rG)$}\label{schint4}
This subsection is devoted to proving Theorem \ref{cleanint1} for the $4$-fold $Y_4$.

First we consider charts for $\bS(\rG)$. Since $\bS(\rG)$ is $\mathrm{Gr}(3,6)$-bundle over $\mathrm{Gr}(4,5)$, we should consider chart for $\Lambda \in \mathrm{Gr}(4,5)$ and $F \in \mathrm{Gr}(3,6) = \mathrm{Gr}(3,\wedge^2 \Lambda)$. There are $5$ standard charts for $\Lambda \in \mathrm{Gr}(4,5)$:
\[
\Lambda= 
\left(\begin{matrix}
1 & 0 & 0 & 0 & a \\
0 & 1 & 0 & 0 & b \\
0 & 0 & 1 & 0 & c \\
0 & 0 & 0 & 1 & d \\ 
\end{matrix}\right),
\Lambda= 
\left(\begin{matrix}
1 & 0 & 0 & a & 0 \\
0 & 1 & 0 & b & 0 \\
0 & 0 & 1 & c & 0 \\
0 & 0 & 0 & d & 1 \\ 
\end{matrix}\right),
\Lambda= 
\left(\begin{matrix}
1 & 0 & a & 0 & 0 \\
0 & 1 & b & 0 & 0 \\
0 & 0 & c & 1 & 0 \\
0 & 0 & d & 0 & 1 \\ 
\end{matrix}\right),
\] 
\[
\Lambda= 
\left(\begin{matrix}
1 & a & 0 & 0 & 0 \\
0 & b & 1 & 0 & 0 \\
0 & c & 0 & 1 & 0 \\
0 & d & 0 & 0 & 1 \\ 
\end{matrix}\right)
\textrm{ and }
\Lambda= 
\left(\begin{matrix}
a & 1 & 0 & 0 & 0 \\
b & 0 & 1 & 0 & 0 \\
c & 0 & 0 & 1 & 0 \\
d & 0 & 0 & 0 & 1 \\ 
\end{matrix}\right).
\]

By Proposition \ref{planes2} (and compare Remark \ref{planeeq}), we know that $T(Y)$ is the double cover over the linear embedding $\PP^1\cong \mathrm{Gr}(1,\CC^4/\langle e_0,e_1,e_4\rangle) \subset \mathrm{Gr}(4,5)$. Therefore, the intersection of $T(\rG)$ and $\bS(Y)$ only occurs in the following two charts:
\[
\Lambda=
\left(\begin{matrix}
1 & 0 & a & 0 & 0 \\
0 & 1 & b & 0 & 0 \\
0 & 0 & c & 1 & 0 \\
0 & 0 & d & 0 & 1 \\ 
\end{matrix}\right)
\textrm{ and }
\Lambda= 
\left(\begin{matrix}
1 & 0 & 0 & a & 0 \\
0 & 1 & 0 & b & 0 \\
0 & 0 & 1 & c & 0 \\
0 & 0 & 0 & d & 1 \\ 
\end{matrix}\right).
\]
and $a=b=d=0$ contained in the equations of $T(Y)$ in both cases, i.e. $a,b,d \in I_{T(Y)}$. Since $T(Y) = T^{2,2}(Y) \coprod T^{3,1}(Y)$, we can consider each part independently. We consider the clean intersection at $T(Y)_{3,1}$ firstly. Consider the first chart:
\[
\Lambda= 
\left(\begin{matrix}
1 & 0 & a & 0 & 0 \\
0 & 1 & b & 0 & 0 \\
0 & 0 & c & 1 & 0 \\
0 & 0 & d & 0 & 1 \\ 
\end{matrix}\right).
\]
Let $q_{01},...,q_{23}$ be a coordinate of a fiber of $\wedge^2\cU$ over this chart. Then we have $p_{12}-p_{03} = -aq_{01}-q_{02}+cq_{12}+dq_{13}$ and $p_{13}-p_{24} = -aq_{03}+q_{12}-bq_{13}-cq_{23}$. Each $\sigma_{3,1}$-plane in  $T(Y)_{3,1}$ which correspond to the vertex $x \in \PP\Lambda \subset \CC^5$ such that $(-q_{02}+cq_{12})(x,y)=0,(q_{12}-cq_{23})(x,y)=0$ (here, we consider $q_{ij}$ as a skew-symmetric two form) for all $y \in \Lambda$, because we have $a=b=d=0$ in $T^{3,1}(Y)$.
Then, by direct calculation, we can check that the sigma $\sigma_{3,1}$-plane correspond to the vertex $x$ contained in $T(Y)_{3,1}$ if and only if it satisfies the equations :
\[
\left( \begin{matrix}
x_0 & x_1 & x_2 & x_3
\end{matrix} \right)
\left( \begin{matrix}
0    & -ay_1 & -y_2  & 0    \\
ay_0 & 0     & cy_2  & dy_3 \\
y_0  & -cy_1 & 0     & 0    \\
0    & -dy_1 & 0     & 0    \\
\end{matrix} \right)
=0
\]
\[
\textrm{and }
\left( \begin{matrix}
x_0 & x_1 & x_2 & x_3
\end{matrix} \right)
\left( \begin{matrix}
0    & 0     & 0     & -ay_3 \\
0    & 0     & y_2   & -by_3 \\
0    & -y_1  & 0     & -cy_3 \\
ay_0 & by_1  & cy_2  & 0     \\
\end{matrix} \right)
=0.
\]
for all $y = (y_0,y_1,y_2,y_3) \in \Lambda$. Thus, we conclude that $x = [-c^2:-c:0:1] \in \PP\Lambda$.
Then, the corresponding $\sigma_{3,1}$-plane is spanned by $(-c^2,-c,0,1)\wedge(1,0,0,0),(-c^2,-c,0,1)\wedge(0,1,0,0),(-c^2,-c,0,1)\wedge(0,0,1,0)$. So we can rewrite it by a following $3\times 6$-matrix :
\[
\left(\begin{matrix}
-c  & 0   & 1 & 0 & 0 & 0 \\ 
c^2 & 0   & 0 & 0 & 1 & 0 \\ 
0   & c^2 & 0 & c & 0 & 1 \\
\end{matrix}\right).
\]
Thus, intersection of $\bS(Y)$ and $T(\rG)_{3,1}$ only occurs in the following chart of $F$:
\[
F=\left(\begin{matrix}
e & f & 1 & g & 0 & 0 \\ 
h & i & 0 & j & 1 & 0 \\ 
k & l & 0 & m & 0 & 1 \\
\end{matrix}\right).
\] 
In this chart, we can compute the ideal of $T^{3,1}(Y)$:
\[
T(Y)_{3,1} = \langle a,b,d,g,i,k,f,j,e+m,h-l,h-c^2,e+c \rangle
\]

On the other hand, $\sigma_{3,1}$-plane contained in this chart of $F$ is defined by the vertex of the form: $x = (\alpha,\beta,\gamma,1)$ which correspond to the following $3 \times 6$-matrix:
\[
\left(\begin{matrix}
\beta   & \gamma  & 1 & 0      & 0 & 0 \\ 
-\alpha & 0       & 0 & \gamma & 1 & 0 \\ 
0       & -\alpha & 0 & -\beta & 0 & 1 \\
\end{matrix}\right).
\]
Thus, we have $I_{T^{3,1}(\rG)} = \langle f-j, e+m, h-l, g,i,k \rangle$. On the other hand, from the equations 
$-aq_{01}-q_{02}+cq_{12}+dq_{13}$ and $-aq_{03}+q_{12}-bq_{13}-cq_{23}$, we obtain ideal for $\bS(Y)$:
\[
I_{\bS(Y)} = \langle -ae -f + cg, -ah - i + cj + d, -ak - l + cm,
-a + g, j-b,m-c  \rangle.
\]
Thus, we can check the clean intersection $I_{T(Y)_{3,1}} = I_{S(Y)} + I_{T(\rG)_{3,1}}$ in the first chart of $\Lambda$ by direct calculation.

Next, consider the second chart:
\[
\Lambda= 
\left(\begin{matrix}
1 & 0 & 0 & a & 0 \\
0 & 1 & 0 & b & 0 \\
0 & 0 & 1 & c & 0 \\
0 & 0 & 0 & d & 1 \\ 
\end{matrix}\right).
\]

Let $q_{01},...,q_{23}$ be a coordinate of a fiber of $\wedge^2\cU$ over this chart. Then we have $p_{12}-p_{03} = -bq_{01}-cq_{02}-dq_{03}+q_{12}$ and $p_{13}-p_{24} = -aq_{01}+cq_{12}+dq_{13}-q_{23}$. In the same manner as in the first chart case, we can show that $\sigma_{3,1}$-plane in $T(Y)^{3,1}$ correspond to the vertex $x = [1:c:0:-c^2] \in \PP\Lambda$.
The corresponding $\sigma_{3,1}$-plane is spanned by $(-c^2,-c,0,1)\wedge(0,1,0,0),(-c^2,-c,0,1)\wedge(0,0,1,0),(-c^2,-c,0,1)\wedge(0,0,0,1)$. So we can rewrite it by a following $3\times 6$-matrix:
\[
\left(\begin{matrix}
1 & 0 & 0 & 0 & c^2 & 0   \\ 
0 & 1 & 0 & c & 0   & c^2 \\ 
0 & 0 & 1 & 0 & c   & 0   \\
\end{matrix}\right).
\]
Thus, the intersection of $\bS(Y)$ and $T(\rG)_{3,1}$ only occurs in the following chart of $F\in Gr(3,\wedge^2 \Lambda)$:
\[
F=\left(\begin{matrix}
1 & 0 & 0 & e & f & g \\ 
0 & 1 & 0 & h & i & j \\ 
0 & 0 & 1 & k & l & m \\
\end{matrix}\right).
\] 
In this chart, we can compute the ideal of $T(Y)^{3,1}$:
\[
T(Y)_{3,1} = \langle a,b,d,g,i,k,e,m,h-l,f-j,h-c,f-c^2 \rangle.
\]

On the other hand, $\sigma_{3,1}$-plane contained in this chart of $F$ is defined by the vertex of the form $x = (1,\alpha,\beta,\gamma)$ which correspond to the following $3 \times 6$-matrix:
\[
\left(\begin{matrix}
1 & 0 & 0 & -\beta & -\gamma & 0       \\ 
0 & 1 & 0 & \alpha & 0       & -\gamma \\ 
0 & 0 & 1 & 0      & \alpha  & \beta   \\
\end{matrix}\right).
\]
Thus, we have $I_{T(\rG)_{3,1}} = \langle g,i,k,f-j,h-l,e+m \rangle$. On the other hand, from the equations 
$-bq_{01}-cq_{02}-dq_{03}+q_{12}$ and $-aq_{01}+cq_{12}+dq_{13}-q_{23}$, we obtain ideal for $\bS(Y)$:
\[
I_{S(Y)} = \langle -b+e,-c+h,-d+k,-a+ce+df-g,ch+di-j,ck+dl-m
 \rangle.
\]
So, we can check the clean intersection $I_{T(Y)_{3,1}} = I_{S(Y)} + I_{T(\rG)_{3,1}}$ in the second chart of $\Lambda$ by direct calculation. In summary, we checked clean intersection at $T(Y)_{2,2}$.

Next, we check clean intersection at $T^{2,2}(Y)$. Let us start with the first chart for $\Lambda$:
\[
\Lambda= 
\left(\begin{matrix}
1 & 0 & a & 0 & 0 \\
0 & 1 & b & 0 & 0 \\
0 & 0 & c & 1 & 0 \\
0 & 0 & d & 0 & 1 \\ 
\end{matrix}\right).
\]
Next $\sigma_{2,2}$-plane corresponds to $\PP^2$-plane in $\PP\Lambda\cong \PP^3 \subset \PP^4$ must one be of the following form (i.e. it correspond to the row space of the matrix $R\cdot \Lambda$): 
\[
R = 
\left( \begin{matrix}
 1 & 0 & 0 & \alpha \\ 
 0 & 1 & 0 & \beta \\
 0 & 0 & 1 & \gamma \\
\end{matrix}
\right)
or
\left( \begin{matrix}
 1 & 0 & \alpha  & 0 \\ 
 0 & 1 & \beta   & 0 \\
 0 & 0 & \gamma  & 1 \\
\end{matrix}
\right)
or
\left( \begin{matrix}
 1 & \alpha  & 0 & 0 \\ 
 0 & \beta   & 1 & 0 \\
 0 & \gamma  & 0 & 1 \\
\end{matrix}
\right)
or
\left( \begin{matrix}
 \alpha  & 1 & 0 & 0 \\ 
 \beta   & 0 & 1 & 0 \\
 \gamma  & 0 & 0 & 1 \\
\end{matrix}
\right).
\]
Therefore, the intersection of $\bS(Y)$ and $T(\rG)^{2,2}$ arises only in the following four charts of $F$: 
\[
F=
\bordermatrix{
&01& 02& 03& 12& 13& 23\cr
&1 & 0 & e & 0 & f & g \cr
&0 & 1 & h & 0 & i & j \cr
&0 & 0 & k & 1 & l & m 
}, 
F=
\bordermatrix{
&01& 02& 03& 12& 13& 23\cr
&1 & e & 0 & f & 0 & g \cr
&0 & h & 1 & i & 0 & j \cr
&0 & k & 0 & l & 1 & m 
},
\]
\[
F=
\bordermatrix{
&01& 02& 03& 12& 13& 23\cr
&e & 1 & 0 & f & g & 0 \cr
&h & 0 & 1 & i & j & 0 \cr
&k & 0 & 0 & l & m & 1 
}
\textrm{and }F=
\bordermatrix{
&01& 02& 03& 12& 13& 23\cr
&e & f & g & 1 & 0 & 0 \cr
&h & i & j & 0 & 1 & 0 \cr
&k & l & m & 0 & 0 & 1 
}
\]
where the upper indices are indices of Pl\"ucker coordinates.
Let us start with the first chart:
\[
F=\left(\begin{matrix}
1 & 0 & e & 0 & f & g \\
0 & 1 & f & 0 & i & j \\
0 & 0 & g & 1 & l & m \\ 
\end{matrix}\right). 
\]
In this case, we can easily observe that $\sigma_{2,2}$-plane contained in the intersection of $T(Y)_{2,2}$ and this chart must correspond to the row space of the matrix:
\[
R\Lambda = 
\left( \begin{matrix}
 1 & 0 & 0 & \alpha \\ 
 0 & 1 & 0 & \beta \\
 0 & 0 & 1 & \gamma \\
\end{matrix}
\right)\cdot \Lambda.
\]
We observe that this $\sigma_{2,2}$-plane which correspond to the row space of the matrix $R\Lambda$ correspond to the following matrix form in the chart of $F$:
\[
\left(\begin{matrix}
1 & 0 & \beta  & 0 & -\alpha & 0       \\
0 & 1 & \gamma & 0 & 0       & -\alpha \\
0 & 0 & 0      & 1 & \gamma  & -\beta  \\
\end{matrix}\right)
\]
But, in this case, the equations $-aq_{01}-q_{02}+cq_{12}+dq_{13}$ and $-aq_{03}+q_{12}-bq_{13}-cq_{23}$ does not have solutions since we have $a=b=d=0$ on $T^{2,2}(Y)$. Therefore, we can show that intersection of $S(Y)$ and $T^{2,2}(\rG)$ does not happens in the chart for $F$:
\[
F=
\left(\begin{matrix}
1 & 0 & e & 0 & f & g \\
0 & 1 & h & 0 & i & j \\
0 & 0 & k & l & l & m \\
\end{matrix} \right).
\]

In the similar manner, we can also show that no intersection of $S(Y)$ and $T^{2,2}(\rG)$ does not happens in the chart for $F$:
\[
F=
\left(\begin{matrix}
e & f & g & 1 & 0 & 0 \\
h & i & j & 0 & 1 & 0 \\
k & l & m & 0 & 0 & 1 \\
\end{matrix} \right).
\]

Therefore, it is enough to consider only two chart for $F$. Let us start with the following chart for $F$:
\[
F=
\left(\begin{matrix}
1 & e & 0 & f & 0 & g \\
0 & h & 1 & i & 0 & j \\
0 & k & 0 & l & 1 & m \\
\end{matrix} \right).
\]

In this case, we can easily observe that a $\sigma_{2,2}$-plane contained in this chart must correspond to the row space of a matrix of the form:
\[
R\Lambda = 
\left( \begin{matrix}
 1 & 0 & \alpha  & 0 \\ 
 0 & 1 & \beta   & 0 \\
 0 & 0 & \gamma  & 1 \\
\end{matrix}
\right)
\cdot \Lambda.
\]

Then, by the equation $-aq_{01}-q_{02}+cq_{12}+dq_{13}$ and $-aq_{03}+q_{12}-bq_{13}-cq_{23}$, we can observe that this $\sigma_{2,2}$-plane contained in $T(Y)_{2,2}$ if and only if it satisfies the following matrix equations:
\begin{align*}
& -a[R]^0\times[R]^1+c[R]^1\times[R]^2+d[R]^1\times[R]^3-[R]^0\times[R]^2=0 \textrm{ and} \\
& [R]^1\times[R]^2-a[R]^0\times[R]^3-b[R]^1\times[R]^3-c[R]^2\times[R]^3=0.
\end{align*}
Since we already have $a=b=d=0$ satisfied in $T^{2,2}(Y)$, by Proposition \ref{planes2}, the above equations reduce to
\begin{align*}
& c[R]^1\times[R]^2 - [R]^0\times[R]^2=0 \textrm{ and} \\
& [R]^1\times[R]^2 - c[R]^2\times[R]^3=0
\end{align*}
Therefore, we have
\begin{align*}
& c(-\gamma,0,\alpha) - (0,0,1) = 0 \\
& (-\gamma,0,\alpha) - c(1,0,0) = 0.
\end{align*}
Thus, there is no solution for these equations. So intersection of $T(\rG)_{2,2}$ and $S(Y)$ does not occur in this chart of $F$. So, in summary, we checked the clean intersection $I_{T(Y)_{2,2}} = I_{S(Y)} + I_{T(\rG)_{2,2}}$ in the first chart of $\Lambda$ and all chart of $F$. We can also check the clean intersection in the second chart for $\Lambda$:
\[
\Lambda= 
\left(\begin{matrix}
1 & 0 & 0 & a & 0 \\
0 & 1 & 0 & b & 0 \\
0 & 0 & 1 & c & 0 \\
0 & 0 & 0 & d & 1 \\ 
\end{matrix}\right).
\]
in the same manner, as we used in the case of the first chart of $\Lambda$. But since all process is parallel, we do not write it down here.

\bibliographystyle{alpha}

\begin{thebibliography}{CHL18}

\bibitem[BW89]{BW89}
J.G. Broida and S.G. Williamson.
\newblock {\em A Comprehensive Introduction to Linear Algebra}.
\newblock Advanced book program. Addison-Wesley, 1989.

\bibitem[CHL18]{CHL18}
Kiryong Chung, Jaehyun Hong, and SangHyeon Lee.
\newblock Geometry of moduli spaces of rational curves in linear sections of
  grassmannian $\text{Gr}(2,5)$.
\newblock {\em Journal of Pure and Applied Algebra}, 222(4):868 -- 888, 2018.

\bibitem[Chu22]{Chu22}
Kiryong Chung.
\newblock Desingularization of Kontsevich's compactification of twisted cubics
  in $V_5$.
\newblock {\em manuscripta mathematica}, 2022.

\bibitem[Chu23]{Chu23}
Kiryong Chung.
\newblock Double lines in the quintic del Dezzo fourfold.
\newblock {\em Bulletin of the Korean Mathematical Society}, 60(2):485--494, 03
  2023.

\bibitem[FN89]{FN89}
Mikio Furushima and Noboru Nakayama.
\newblock The family of lines on the fano threefold $V_5$.
\newblock {\em Nagoya Mathematical Journal}, 116:111--122, 1989.

\bibitem[FN72]{FN71}
Akira Fujiki and Shigeo Nakano.
\newblock Supplement to ``{O}n the inverse of monoidal transformation''.
\newblock {\em Publ. Res. Inst. Math. Sci.}, 7:637--644, 1971/72.

\bibitem[Fuj81]{Fuj81}
Takao Fujita.
\newblock {On the structure of polarized manifolds with total deficiency one,
  II}.
\newblock {\em Journal of the Mathematical Society of Japan}, 33(3):415 -- 434,
  1981.

\bibitem[Ili94]{Ili94}
Atanas Iliev.
\newblock The Fano surface of the Gushel threefold.
\newblock {\em Compositio Mathematica}, 94(1):81--107, 1994.

\bibitem[KPS18]{KPS18}
Alexander~G. Kuznetsov, Yuri~G. Prokhorov, and Constantin~A. Shramov.
\newblock Hilbert schemes of lines and conics and automorphism groups of fano
  threefolds.
\newblock {\em Japan. J. Math.}, pages 685--789, 2018.

\bibitem[Lee18]{Lee18}
Sanghyeon Lee.
\newblock {\em Geometry of moduli spaces of rational curves on Fano varieties}.
\newblock {Ph.D. Thesis}, Seoul National University, 2018.

\bibitem[Li09]{Li09}
Li~Li.
\newblock Wonderful compactification of an arrangement of subvarieties.
\newblock {\em Michigan Math. J.}, 58(2):535--563, 2009.

\bibitem[Pro94]{Pro94}
Yuri Prokhorov.
\newblock Compactifications of $\mathbb{C}^4$ of index $3$.
\newblock {\em In Algebraic geometry and its applications (Yaroslavl' , 1992)
  Aspects Math.}, E25:159--169, Vieweg, Braunschweig, 1994.

\bibitem[San14]{San14}
Giangiacomo Sanna.
\newblock Rational curves and instantons on the fano threefold $Y_5$.
\newblock {\em arXiv:1411.7994}, 2014.

\end{thebibliography}

\end{document}